\title{Random real branched coverings of the projective line.}
\author{Michele Ancona\,\thanks{Tel Aviv University, School of Mathematical Sciences; e-mail: \url{ancona@math.univ-lyon1.fr}. The author is supported by the Israeli Science Foundation through the ISF Grants 382/15 and 501/18.}} 
\date{}
\theoremstyle{plain}
\newtheorem{thm}{Theorem}[section]
\newtheorem{lemma}[thm]{Lemma}
\newtheorem{prop}[thm]{Proposition}
\newtheorem{cor}[thm]{Corollary}
\newtheorem{oss}[thm]{Remark}
\newtheorem{example}[thm]{Example}
\theoremstyle{definition}
\newtheorem{defn}[thm]{Definition}
\newcommand{\R}{\mathbb{R}}
\newcommand{\C}{\mathbb{C}}
\newcommand{\Crit}{\textrm{Crit}}
\newcommand{\Vol}{\textrm{Vol}}
\newcommand{\Fix}{\textrm{Fix}}
\newcommand{\conj}{\textit{conj}}
\newcommand{\Pic}{\textrm{Pic}}
\newcommand{\dH}{\textrm{dH}}
\begin{document}
\maketitle
\begin{abstract} In this paper, we construct a natural probability measure on the space of real branched coverings from a real projective algebraic curve $(X,c_X)$ to the projective line $(\C\mathbb{P}^1,\conj)$. We prove that the space of degree $d$ real branched coverings having "many" real branched points (for example more than $\sqrt{d}^{1+\alpha}$, for any $\alpha>0$) has exponentially small measure. In particular, maximal real branched coverings, that is  real branched coverings such that all the branched points are real, are exponentially rare.
\end{abstract}
\section*{Introduction}
Let $(X,c_X)$ be a real algebraic curve, that is a smooth complex complex  curve equipped with  an anti-holomorphic involution $c_X$, called the real structure. We denote by $\R X$ the real locus of $X$, that is the set $\textrm{Fix}(c_X)$ of fixed points of $c_X$. For example the projective line $(\C\mathbb{P}^1,\conj)$ is a real algebraic curve whose real locus equals $\R \mathbb{P}^1$.

The central objects of this paper are real branched coverings from $X$ to $\C\mathbb{P}^1$, that is, the branched coverings   $u:X\rightarrow \C\mathbb{P}^1$ such that $u\circ c_{X}=\conj\circ u$. Let us denote by  $\mathcal{M}^{\R}_d(X)$ the set of degree $d$ real branched coverings from $X$ to $\C\mathbb{P}^1$. 
The first purpose of the paper is to show that  $\mathcal{M}^{\R}_d(X)$ has a natural probability measure $\mu_d$ induced by a compatible volume form $\omega$ of $X$ (that is $c_{X}^*\omega=-\omega$), which we fix once for all. Later in the introduction we will sketch the construction of the measure $\mu_d$, which we will give in details in Section \ref{sectproba}. 
By Riemann-Hurwitz formula, the number of critical points, counted with multiplicity, of a degree $d$ branched covering $u:X\rightarrow \C\mathbb{P}^1$ equals $2d+2g-2$, where $g$ is the genus of $X$. The probability measure $\mu_d$ allows us to ask the following question. 
\begin{center}
What is the probability that all the critical points of a real branched covering $u\in\mathcal{M}^{\R}_d(X)$ are real?
\end{center}
In \cite{anc}, it is proved that the expected number of real critical points is equivalent to $c\sqrt{d}$ as the degree $d$ of the random branched covering goes to infinity. The constant $c$ is  explicit, given by $c=\sqrt{\frac{\pi}{2}}\Vol(\R X)$, where $\Vol(\R X)$ is the length of the real locus of $X$ with respect to the Riemannian metric induced by $\omega$.
The main theorem of the paper is the following exponential rarefaction  result for real branched coverings having "many" real critical points. 
\begin{thm}\label{rarefaction} Let $X$ be a real algebraic curve. Let $\ell(d)$ be a sequence of positive real numbers such that $\ell(d)\geq B \log d$, for some $B>0$. Then there exist positive constants $c_1$ and $c_2$ such that the following holds 
$$\mu_d\big\{u\in \mathcal{M}_d^{\R}(X), \#(\Crit(u)\cap \R X)\geq \ell(d)\sqrt{d}\big\}\leq c_1e^{-c_2\ell(d)^2}.$$
\end{thm}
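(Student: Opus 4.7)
The plan is to reduce the statement to an anticoncentration estimate for an auxiliary random holomorphic section, and then combine a complex-analytic bound for real zeros with a Gaussian small-ball probability.

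\textbf{Reduction to a Wronskian.} By the construction of $\mu_d$ (Section \ref{sectproba}), a real branched covering $u\in\M^{\R}_d(X)$ is represented as a class $[s_0:s_1]$ where $(s_0,s_1)$ is a Gaussian pair of real sections of a real degree-$d$ line bundle $L_d\to X$. The critical locus of $u$ is precisely the zero set of the Wronskian
$$
W(s_0,s_1):=s_0\otimes ds_1-s_1\otimes ds_0\in H^0(X,L_d^{\otimes 2}\otimes K_X),
$$
and a critical point of $u$ is real iff it lies in $\R X$, on which $W$ restricts to a real section. Thus the theorem amounts to bounding the probability that $W$ has at least $\ell(d)\sqrt d$ real zeros in $\R X$.

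\textbf{Deterministic count of real zeros.} Cover $\R X$ by $M=O(\sqrt d)$ arcs $I_j$ of Riemannian length $1/\sqrt d$, and let $\widehat I_j$ be their complex Grauert thickenings of the same scale. A Jensen/Nazarov-type inequality applied in local coordinates to the holomorphic section $W$ gives, for every $j$,
$$
\#\{x\in I_j:W(x)=0\}\leq C\log\frac{\|W\|_{L^\infty(\widehat I_j)}}{\|W\|_{L^\infty(I_j)}}.
$$
Summing over $j$, the event $\#(\Crit(u)\cap\R X)\geq\ell(d)\sqrt d$ forces the existence of at least one arc $I_{j^*}$ with $\log(\|W\|_{L^\infty(\widehat I_{j^*})}/\|W\|_{L^\infty(I_{j^*})})\geq c\,\ell(d)$.

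\textbf{Gaussian small-ball estimate.} For each fixed reference point $x_0\in I_{j^*}$, the complex number $W(x_0)$ is a degree-two Gaussian polynomial in $(s_0,s_1)$. Using the Bergman-kernel asymptotics for $L_d$ developed in \cite{anc}, its variance is comparable to $\mathbb E\|W\|_{L^\infty(\widehat I_{j^*})}^2$ uniformly in $d$. A standard anticoncentration estimate for polynomial Gaussian variables (obtained, for instance, by conditioning on $s_0$ and applying a one-dimensional Gaussian small-ball bound to the now linear Gaussian $s_1\mapsto W(s_0,s_1)(x_0)$) then yields
$$
\mathbb P\Bigl(|W(x_0)|\leq e^{-c\ell(d)}\,\|W\|_{L^\infty(\widehat I_{j^*})}\Bigr)\leq c_3\,e^{-c_4\ell(d)^2}.
$$
A union bound over the $M=O(\sqrt d)$ arcs contributes a factor of order $\sqrt d$, which is absorbed into the exponential thanks to the hypothesis $\ell(d)\geq B\log d$, yielding the claimed rate $c_1 e^{-c_2\ell(d)^2}$.

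\textbf{Main obstacle.} The principal difficulty is that $W$ depends quadratically, not linearly, on the Gaussian vector $(s_0,s_1)$, so one cannot directly apply Gaussian tail estimates for linear functionals. I would handle this either by conditioning on $s_0$ (which turns $W$ into a linear Gaussian in $s_1$ and reduces the issue to a uniform lower bound on the conditional variance), or by invoking a Carbery--Wright-type anticoncentration for Gaussian chaos of degree two. A secondary technical point is the uniform comparison, over $j^*$ and over $x_0\in I_{j^*}$, between the variance of the point evaluation $W(x_0)$ and the typical size of $\|W\|_{L^\infty(\widehat I_{j^*})}$; this relies on the sharp near-diagonal and off-diagonal Bergman-kernel estimates already established for $L_d$ in \cite{anc}.
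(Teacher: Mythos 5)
Your first two steps---representing $u$ by a Gaussian pair of real sections via Proposition \ref{vs} and identifying $\Crit(u)$ with the real zeros of the Wronskian $W$---are exactly the paper's reduction. The gap is in the probabilistic step: the claimed small-ball estimate
$$
\mathbb{P}\Bigl(\abs{W(x_0)}\leq e^{-c\ell(d)}\,\norm{W}_{L^\infty(\widehat I_{j^*})}\Bigr)\leq c_3\,e^{-c_4\ell(d)^2}
$$
is false, and no anticoncentration bound for a \emph{single} point evaluation can produce the exponent $\ell(d)^2$. Indeed $W(x_0)$ is a nondegenerate quadratic form in the Gaussian vector $(s_0,s_1)$ (in peak-section coordinates it is essentially $a_0b_1-a_1b_0$); after your conditioning on $s_0$ it becomes a one-dimensional real Gaussian of conditional variance $\sigma^2(s_0)$, and the one-dimensional small-ball bound gives $\mathbb{P}(\abs{W(x_0)}\leq\epsilon\,\sigma(s_0))\leq C\epsilon$ --- linear in $\epsilon$, while Carbery--Wright for degree-two chaos is even weaker ($C\epsilon^{1/2}$). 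With $\epsilon=e^{-c\ell(d)}$ the best you can extract is $e^{-c'\ell(d)}$ (after also controlling the event that $\norm{W}_{L^\infty(\widehat I_{j^*})}$ is atypically large, which is harmless since degree-two chaos has exponential tails). So your scheme, if the Jensen-type zero count and the variance comparison are carried out, proves the theorem with $e^{-c_2\ell(d)}$ in place of $e^{-c_2\ell(d)^2}$: still an exponential rarefaction for $\ell(d)=\sqrt{d}^{\,\alpha}$, but not the stated rate.

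The square in the exponent must come from the fact that having $\ell(d)\sqrt d$ real zeros forces smallness at many quasi-independent locations simultaneously, and your reduction to one reference point in one arc discards exactly this. The paper aggregates it as follows: Poincar\'e--Lelong tested against a cutoff $\chi_{t_d}$ with $t_d\sim\ell(d)/\sqrt d$ (Lemma \ref{cutoff}) shows that $(\alpha,\beta)\in\mathcal{C}_{\ell(d)}$ forces $\abs{\int_X\log(\frac{\pi}{d^{3/2}}\norm{W_{\alpha\beta}})\partial\bar\partial\chi_{t_d}}\gtrsim\ell(d)\sqrt d$, while the factorial moment bounds of Proposition \ref{moments} --- valid only at distance $\geq B\frac{\log d}{\sqrt d}$ from $\R X$, which is where the hypothesis $\ell(d)\geq B\log d$ is actually used, not merely to absorb your union-bound factor $O(\sqrt d)$ --- make that integral a sub-exponential variable of scale $\norm{\partial\bar\partial\chi_{t_d}}_\infty\Vol(\Supp\partial\bar\partial\chi_{t_d})\sim\sqrt d/\ell(d)$. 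Exponential Chebyshev (Proposition \ref{largedev}) then gives $e^{-\ell(d)\sqrt d\cdot\ell(d)/\sqrt d}=e^{-c\ell(d)^2}$. To rescue your local approach you would need a \emph{joint} small-ball estimate: $c\,\ell(d)$ zeros in one arc of length $1/\sqrt d$ force the first $c\,\ell(d)$ rescaled derivatives of $W$ at $x_0$ to be simultaneously small, and you would have to prove that the joint law of these $c\,\ell(d)$ quadratic functionals is nondegenerate enough for the probabilities to multiply down to $\epsilon^{c\ell(d)}=e^{-c'\ell(d)^2}$. That nondegeneracy is a substantial missing ingredient, not a routine detail.
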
 
For example, for any fixed $\alpha>0$, we can consider the sequence $\ell(d)=\sqrt{d}^{\alpha}$. Theorem \ref{rarefaction} says that the space of real branched coverings having more than $\sqrt{d}^{1+\alpha}$ real critical points has exponential small measure. In particular, maximal real branched converings (i.e. branched coverings such that all the critical points are real) are exponentially rare.

\paragraph{The probability measure on $\mathcal{M}_d^{\R}(X)$.} The construction of the probability measure on $\mathcal{M}_d^{\R}(X)$ uses the fact that there is a  natural map from  $\mathcal{M}_d^{\R}(X)$ to the space of degree $d$ real holomorphic line bundle $\Pic^d_{\R}(X)$, see Proposition \ref{natmor}. This  map sends a degree $d$ morphism $u$ to the degree $d$ line bundle $u^*\mathcal{O}(1)$.  The fiber  of this map over $L \in\Pic^d_{\R}(X)$  is the  open dense subset of $\mathbb{P}(\R H^0(X,L)^2)$ given by (the class of) pairs of global sections without common zeros.  In order to construct a probability measure on $\mathcal{M}^{\R}_d(X)$, we produce a family of probability measures $\{\mu_{L}\}_{L\in\Pic_{\R}^d(X)}$ on each space  $\mathbb{P}(\R H^0(X,L)^2)$.  The probability measure $\mu_{L}$ on $\mathbb{P}(\R H^0(X,L)^2)$ is the measure induced by the Fubini-Study metric associated with a real Hermitian product on $\R H^0(X,L)^2$. This Hermitian product is the natural $\mathcal{L}^2$-product induced by $\omega$, see Section \ref{background}. This family of measures, together with the Haar probability measure on the base $\Pic^d_{\R}(X)$, gives rise to the probability measure $\mu_d$ on $\mathcal{M}_d^{\R}(X)$.

\paragraph{An example: the projective line.}  Let us consider the case $X=\mathbb{C}\mathbb{P}^1$ equipped with the conjugaison $\conj([x_0:x_1])=[\bar{x}_0:\bar{x}_1]$. Given two degree $d$ real polynomials $P,Q\in \R_d^{hom}[X_0,X_1]$ without common zeros, we produce a degree $d$ real branched covering $u_{PQ}:\mathbb{C}\mathbb{P}^1\rightarrow\mathbb{C}\mathbb{P}^1$ by sending $[x_0:x_1]$ to $[P(x_0,x_1):Q(x_0,x_1)]$. We also remark that the  pair $(\lambda P,\lambda Q)$ defines the same branched covering. Conversely, one can prove that any degree $d$ real branched covering $u:\C\mathbb{P}^1\rightarrow \C\mathbb{P}^1$ is of the form $u=u_{PQ}$ for some (class of) pair of polynomials $(P,Q)$ without common zeros.  This means that $\mathcal{M}^{\R}_d(\C\mathbb{P}^1)=\mathbb{P}(\R_d^{hom}[X_0,X_1]^2\setminus \Lambda_d)$, where $\Lambda_d$ is the set of polynomials with at least one common zero. Consider the affine chart $\{x_1\neq 0\}$, the corresponding coordinate $x=\frac{x_0}{x_1}$  and the polynomials $p(X)=P(X_0,1)$ and $q(X)=Q(X_0,1)$. Then, one can see that a point $x\in \{x_1\neq 0\}$ is a critical point of $u_{PQ}$ if and only $p'(x)q(x)-q'(x)p(x)=0$ (see Proposition \ref{wronskiancritical}).

 In the the previous paragraph, we constructed a probability measure on this space by fixing a compatible volume form on source space, in this case $\C\mathbb{P}^1$. Indeed, a compatible volume form induces a $\mathcal{L}^2$-scalar product on $\R_d^{hom}[X_0,X_1]$ which will induce a Fubini-Study volume form on $\mathbb{P}(\R_d^{hom}[X_0,X_1]^2)$ and then a probability on  $\mathcal{M}^{\R}_d(\C\mathbb{P}^1)$. If we equip $\mathbb{C}\mathbb{P}^1$ with the Fubini-Study form, then the induced scalar product on $\R_d^{hom}[X_0,X_1]$ is the one which makes $\{\sqrt{\binom{d}{k}}X_0^kX_1^{d-k}\}_{0\leq k\leq d}$ an orthonormal basis. This scalar product was considered by Kostlan in \cite{ko} (see also \cite{ss}). It is the only scalar product invariant under the action of the orthogonal group $O(2)$ (which acts on the variables $X_0$ and $X_1$) and such that the standard monomials are orthogonal to each other.

\paragraph{About the proof.} There are two main steps in the proof of our main theorem.  First, we reduce our problem into the problem of the computation of the \emph{Gaussian} measure of a cone $\mathcal{C}_{\ell(d)}$ which lies inside the space of pairs of global sections of a real holomorphic line bundle over $X$. This cone is  defined by using the Wronskian of a pair of  global sections, which plays a key role. Then,  we use peak sections theory  to estimate some Markov moments related to this Wronskian. These moments, together with Poincar\'e-Lelong formula, allow us to estimate the measure of the cone $\mathcal{C}_{\ell(d)}$.

Let sketch the proof  in more details.
We fix  a degree $1$ real holomorphic line bundle $F$ over $X$, so that, for any $L\in\Pic^d_{\R}(X)$ there exists an unique $E\in\Pic^0_{\R}(X)$ such that $L=F^d\otimes E$. Recall that any class of pairs of real global sections without common zeros $[\alpha:\beta]\in \mathbb{P}(\R H^0(X,F^d\otimes E)^2)$  defines a real branched covering $u_{\alpha\beta}$ by sending a point $x\in X$ to $[\alpha(x):\beta(x)]\in\C\mathbb{P}^1$.  Theorem \ref{rarefaction} will follow from the estimate
\begin{equation}\label{about1}
\mu_{F^d\otimes E}\big\{[\alpha:\beta]\in \mathbb{P}(\R H^0(X,F^d\otimes E)^2), \#(\Crit(u_{\alpha\beta})\cap \R X)\geq \ell(d)\sqrt{d}\big\}\leq c_1e^{-c_2\ell(d)^2}
\end{equation}
where $\mu_{F^d\otimes E}$ is the probability measure induced by the Fubini-Study metric on $\mathbb{P}(\R H^0(X,F^d\otimes E)^2)$. Indeed, if we integrate the inequality \eqref{about1} along $\Pic^0_{\R}(X)$ we exactly obtain  Theorem \ref{rarefaction}.
To prove the estimate \eqref{about1}, we will use the following two facts. First, a point $x$ is a critical point of $u_{\alpha\beta}$ if and only if it is a zero of the Wronskian $W_{\alpha\beta}\doteqdot\alpha\otimes\nabla\beta-\beta\otimes\nabla\alpha$. Second, the pushforward (with respect to the projectivization) of the Gaussian measure on $\R H^0(X,F^d\otimes E)^2$ is exactly the probability measure  $\mu_{F^d\otimes E}$. These two facts imply that the estimate \eqref{about1} is equivalent to the fact that the Gaussian measure  of the cone
\begin{equation}\label{about2}
\mathcal{C}_{\ell(d)}\doteqdot\big\{(\alpha,\beta)\in \R H^0(X,F^d\otimes E)^2, \#(\textrm{real zeros of}\hspace{1mm}W_{\alpha\beta})\geq \ell(d)\sqrt{d}\big\}
\end{equation}
is bounded from above by $c_1e^{-c_2\ell(d)^2}$. \\
In order to estimate the Gaussian measure of $\mathcal{C}_{\ell(d)}$, inspired by \cite{gwexp},   we  bound from above the moments of the random variable 
$(\alpha,\beta)\in \R H^0(X,F^d\otimes E)^2\mapsto \log \norm{W_{\alpha\beta}(x)}$, where $x$ is a point in $X$ such that $\textrm{dist}(x,\R X)$ is bigger that $\frac{\log d}{\sqrt{d}}$, see Proposition \ref{moments}.  This condition on the distance is  natural, it is strictly related to peak section's theory  (see \cite{tian,gw1}) and it is the reason why we need the hypothesis on the growth of the sequence $\ell(d)$ in Theorem \ref{rarefaction}. The estimate of these moments uses two ingredients: the theory of peak sections and the comparison between the norms of two differents evaluation maps (and more generally jet maps).
Once   these moments are estimates,  Markov inequality and Poincar\'e-Lelong formula gives us the exponential rarefaction of the Gaussian measure of the cone (\ref{about2}).

\paragraph*{Organization of the paper.} The paper is organized as follows. In Section \ref{background} we introduce the main objects and notations of this paper. In Sections \ref{secspaceofbranched} and \ref{sectproba} we study the geometry of the manifold $\mathcal{M}^{\R}_d(X)$  and we construct the probability measure $\mu_d$ on it.\\
The purpose of  Section \ref{secmoment} is to prove Proposition \ref{moments}, that is, to estimate of the moments of the random variable $(\alpha,\beta)\in \R H^0(X,F^d\otimes E)^2\mapsto \log \norm{(\alpha\otimes\nabla\beta-\beta\otimes\nabla\alpha)(x)}$, for $F$ and  $E$ are respectively a degree $1$ and $0$ real holomorphic line bundles. In order to do this, in Section \ref{secgauss} we introduce Gaussian measures on  $\R H^0(X,F^d\otimes E)^2$  and in Section \ref{secpeak} we study jet maps   at points $x\in X$ which are far from the real locus.
Finally, in Section \ref{secproof}, we deduce Theorem \ref{rarefaction} from the estimates established in Section \ref{secmoment}.
\section[Random coverings]{Random real branched coverings}\label{ramframwork}

\subsection{Background}\label{background}
Let $(X,c_X)$ be a real algebraic curve, that is a complex, projective, smooth curve equipped with  an anti-holomorphic involution $c_X$, called the real structure. We  assume that the real locus $\R X\doteqdot\Fix(c_X)$  is non empty. For example $(\C\mathbb{P}^1,\conj)$, where $\conj([x_0:x_1])=[\bar{x}_0:\bar{x}_1]$, is a real algebraic curve whose real locus is $\R\mathbb{P}^1$.
 A real holomorphic line bundle $p:L\rightarrow X$  is a line bundle  equipped with an anti-holomorphic involution $c_{L}$ such that $p\circ c_X=c_{L}\circ p$ and $c_{L}$ is complex-antilinear in the fibers. 
 %When $\R X\neq \emptyset$, such a real structure on a line bundle exists if and only if the line bundle $c^*_{X}L$ and $L$ are isomorphic (see \cite[Proposition 2.2]{realalgebraiccruves}). 
We denote by $\mathbb{R} H^0(X;L)$ the real vector space of real holomorphic global sections of $L$, i.e. sections $s\in H^0(X;L)$ such that $s \circ c_X=c_{L}\circ s$. Let  $\Pic_{\R}^d(X)$ be the set of degree $d$ real line bundles. It is a principal space under the action of the compact topological abelian group $\Pic_{\R}^0(X)$ and so it inherits a normalized Haar measure that we denote by  $\dH$ (see, for example, \cite{realalgebraiccurves}). 
 Finally, recall  that a real Hermitian metric $h$ on $L$ is a Hermitian metric  on $L$ such that $c_{L}^*h=\bar{h}$.
\begin{prop}\label{metr} Let $(X,c_X)$ be a real algebraic curve and let $\omega$  be a compatible volume form of mass $1$, that is $c_{X}^*\omega=-\omega$ and $\int_X\omega=1$. Let $L\in \Pic^d_{\R}(X)$ be a degree $d$ real holomorphic line bundle over $X$, then there exists an unique  real Hermitian metric $h$ (up to multiplication by a positive real constant) such that $c_1(L,h)=d\cdot\omega$. 
\end{prop}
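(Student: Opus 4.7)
The plan is to combine three standard ingredients: the $\partial\bar\partial$-lemma on the compact Riemann surface $X$ to produce some Hermitian metric with the prescribed Chern form $d\omega$, an averaging trick using the real structure to make the metric real, and the maximum principle for harmonic functions for uniqueness.

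First, I would forget the reality condition and pick any Hermitian metric $h_0$ on $L$. Both $c_1(L, h_0)$ and $d\omega$ are smooth real $(1,1)$-forms representing the integer cohomology class $c_1(L)\in H^2(X,\R)$, so their difference is a real $(1,1)$-form with zero integral on $X$. Since $X$ is a compact K\"ahler manifold, the $\partial\bar\partial$-lemma yields a smooth real function $\psi$ with $d\omega-c_1(L,h_0)=\frac{i}{2\pi}\partial\bar\partial\psi$, and the Hermitian metric $h_1:=e^{-\psi}h_0$ then satisfies $c_1(L,h_1)=d\omega$.

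Next, to achieve reality, I would symmetrize $h_1$ against the real structure by introducing the conjugate metric $\tilde h(v,w):=\overline{h(c_Lv,c_Lw)}$, which is again a Hermitian metric because the two antilinearities of $c_L$ cancel. A direct local computation in a holomorphic frame, using that $c_X^*\partial=\bar\partial c_X^*$ for the antiholomorphic involution, establishes the transformation formula $c_1(L,\tilde h)=-c_X^*\,c_1(L,h)$. Combined with the compatibility $c_X^*\omega=-\omega$, this yields $c_1(L,\tilde h_1)=d\omega$ as well, so in a local frame where $h_1(e,e)=e^{-\varphi_1}$ and $\tilde h_1(e,e)=e^{-\tilde\varphi_1}$ I can define $h$ by $h(e,e):=e^{-(\varphi_1+\tilde\varphi_1)/2}$. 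The involutive identity $\widetilde{\tilde h_1}=h_1$ ensures $\tilde h=h$ (reality), and averaging the local potentials gives $c_1(L,h)=\tfrac{1}{2}(c_1(L,h_1)+c_1(L,\tilde h_1))=d\omega$.

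For uniqueness, any two real Hermitian metrics $h,h'$ with $c_1=d\omega$ differ by a positive smooth factor $h'=e^{-\phi}h$, and the equality of Chern forms forces $\partial\bar\partial\phi=0$, so $\phi$ is harmonic and therefore constant by the maximum principle on the compact connected surface $X$; hence $h'=\lambda h$ with $\lambda>0$. The only really delicate step I expect is the sign check in the transformation formula $c_1(L,\tilde h)=-c_X^*c_1(L,h)$: the minus sign arises because $c_X$ is antiholomorphic and therefore swaps $\partial$ with $\bar\partial$, and crucially it matches the compatibility $c_X^*\omega=-\omega$ so that $d\omega$ is a fixed point of the involution $\eta\mapsto-c_X^*\eta$ on curvature forms, which is exactly what lets the averaging preserve the Chern form.
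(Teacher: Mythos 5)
Your proposal is correct and follows essentially the same route as the paper: the heart of the argument in both cases is the sign identity $c_1(L,\overline{c_L^*h})=-c_X^*c_1(L,h)$ combined with $c_X^*\omega=-\omega$, followed by the geometric-mean symmetrization $(h\cdot\overline{c_L^*h})^{1/2}$ (your averaging of local potentials is exactly this). The only difference is cosmetic: the paper outsources existence and uniqueness of the (not necessarily real) metric with curvature $d\cdot\omega$ to a citation and then shows that this unique metric is already real, whereas you prove existence via the $\partial\bar\partial$-lemma and uniqueness via harmonicity and then symmetrize an arbitrary solution; both are valid.
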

\begin{proof} For the existence and unicity of such metric, see \cite[Proposition 1.4]{anc3}. The fact that the metric $h$ is real follows from the  following argument. Let us consider the Hermitian metric $\overline{c^*_{L}h}$ on $L$. Claim: its curvature equals $-d\cdot c_X^*\omega$. Indeed, for any $x\in X$ we  consider a real meromorphic section $s$ of $L$ such that $x$ and $c_{X}(x)$ are neither zero nor pole of $s$ (such section exists by Riemann-Roch Theorem).  Then, the curvature of $(L,\overline{c^*_{L}h})$ around $x$ is $\partial\bar{\partial}\log (\overline{c^*_{L}h})_x(s(x),s(x))=\partial\bar{\partial}\log h_{c_X(x)}(c_{L}(s(x)),c_{L}(s(x)))=\partial\bar{\partial}\log h_{c_X(x)}(s(c_X(x)),s(c_X(x)))=\partial\bar{\partial} c^*_X\log h (s,s)=-c^*_X\partial\bar{\partial}\log h (s,s),$ where the last equality is due to the anti-holomorphicity of  $c_X$. Then, the claim follows from the fact that $\partial\bar{\partial}\log h (s,s)=d\cdot\omega$. \\
 Now, consider  the real Hermitian metric $(h\cdot\overline{c^*_{L}h})^{1/2}$. Its curvature equals $$\frac{1}{2}\big(\partial\bar{\partial}\log h(s,s)+\partial\bar{\partial}\log (\overline{c^*_{L}h})(s,s)\big)=\frac{1}{2}(d\cdot\omega-d\cdot c_X^*\omega)=d\cdot\omega,$$ where the last equality follows from the fact that  $\omega$ is compatible with the real structure. By unicity of the metric with  curvature $d\cdot\omega$, this implies that $(h\cdot\overline{c^*_{L}h})^{1/2}$ is a multiple of $h$. We actually have the equality $(h\cdot\overline{c^*_{L}h})^{1/2}=h$, because for a real point $x\in\R X$ and a real vector $v\in \R L_x$ we get $(h_x(v,v)\cdot(\overline{c^*_{L}h})_x(v,v))^{1/2}=(h_x(v,v)\cdot h_x(v,v))^{1/2}=h_x(v,v)$. 
\end{proof}
\begin{defn}\label{scalarprod} Let  $\omega$  be a compatible volume form of mass $1$, let  $L\in \Pic^d_{\R}(X)$ be a degree $d$ line bundle over $X$ and $h$ be the real Hermitian metric given by the previous proposition. We define the $\mathcal{L}^2$-scalar product on $\R H^0(X;L)$ by
$$\langle\alpha,\beta\rangle_{\mathcal{L}^2}=\int_{x\in X}h_x(\alpha(x),\beta(x))\omega$$
for any pair of real holomorphic sections $\alpha,\beta\in\R H^0(X;L).$
\end{defn}

\subsection{The space of real branched coverings}\label{secspaceofbranched} 
In this section we introduce and study the space of real branched coverings from a real algebraic curve $(X,c_X)$ to $(\C\mathbb{P}^1,\conj)$.
\begin{defn} We denote by $\mathcal{M}^{\R}_d(X)$ the space of all degree $d$ real branched coverings $u:X\rightarrow \C\mathbb{P}^1$, that are the branched coverings    such that $u\circ c_{X}=\conj\circ u$.
\end{defn}
A natural way to define a degree $d$  real branched covering  is the following one. Consider a degree $d$ real holomorphic line bundle $L\in \Pic^d_{\R}(X)$ and two real holomorphic sections $\alpha,\beta\in \R H^0(X,L)$ without common zeros. Then, we can define the degree $d$ real branched covering $u_{\alpha\beta}$ defined by $$u_{\alpha\beta}:x\in X\mapsto [\alpha(x):\beta(x)]\in\C\mathbb{P}^1.$$
\begin{prop}\label{realproj} Two pairs  $(\alpha,\beta), (\alpha',\beta')$ of real holomorphic sections of $L$ define the same real branched covering if and only if  $(\alpha',\beta')=(\lambda\alpha,\lambda\beta)$ for some $\lambda\in\R^*$.
\end{prop}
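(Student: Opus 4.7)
The ``if'' direction is immediate from the definition of homogeneous coordinates on $\C\mathbb{P}^1$: for any $\lambda\in\R^*$ and any $x\in X$, one has $[\lambda\alpha(x):\lambda\beta(x)]=[\alpha(x):\beta(x)]$, so $u_{\alpha\beta}=u_{\lambda\alpha,\lambda\beta}$.

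For the ``only if'' direction, assume $u_{\alpha\beta}=u_{\alpha'\beta'}$. The plan is to produce a global holomorphic (hence constant) function $\lambda:X\to\C^*$ such that $\alpha'=\lambda\alpha$ and $\beta'=\lambda\beta$, and then to show that reality of the four sections forces $\lambda\in\R^*$. Since $\alpha,\alpha'$ are two sections of the same real line bundle $L$, the quotient $\alpha'/\alpha$ is a well-defined meromorphic function on $X$ (the bundle transition factors cancel in any local trivialization). The same holds for $\beta'/\beta$. The identity $[\alpha(x):\beta(x)]=[\alpha'(x):\beta'(x)]$ in $\C\mathbb{P}^1$ means that wherever both denominators are nonzero, $\alpha'/\alpha=\beta'/\beta$. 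Call this common value $\lambda(x)$.

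The crucial step is to check that $\lambda$ is holomorphic on all of $X$. Away from $Z(\alpha)$, the expression $\alpha'/\alpha$ is holomorphic. Since $(\alpha,\beta)$ have no common zero, at any $x_0\in Z(\alpha)$ one has $\beta(x_0)\neq 0$, so the expression $\beta'/\beta$ is holomorphic near $x_0$ and coincides with $\alpha'/\alpha$ on a punctured neighborhood. Thus $\lambda$ extends holomorphically across $Z(\alpha)$, and symmetrically across $Z(\beta)$. The hypothesis that $(\alpha',\beta')$ also have no common zero ensures that $\lambda$ has no zeros either (otherwise both $\alpha'$ and $\beta'$ would vanish where $\alpha$ and $\beta$ do not). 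Hence $\lambda:X\to\C^*$ is holomorphic on the compact connected curve $X$, so by the maximum principle $\lambda$ is a constant $\lambda\in\C^*$.

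It remains to show $\lambda\in\R^*$. This uses the reality of all four sections. From $\alpha'=\lambda\alpha$, applying the antiholomorphic $c_L$ (which is $\C$-antilinear on fibers) to both sides at a point $x$ yields $c_L(\alpha'(x))=\bar\lambda\,c_L(\alpha(x))$, i.e.\ $\alpha'(c_X(x))=\bar\lambda\,\alpha(c_X(x))$ by reality. Since $c_X$ is a bijection and $\alpha\not\equiv 0$, this forces $\bar\lambda=\lambda$, so $\lambda\in\R^*$, completing the proof. The only genuinely non-routine point is handling the behaviour of $\lambda$ at the zero loci $Z(\alpha)$ and $Z(\beta)$, which is resolved precisely by the no-common-zero hypothesis built into the definition of a branched covering coming from a pair of sections.
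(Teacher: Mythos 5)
Your proof is correct. The paper itself gives no argument here, deferring to \cite[Proposition 1.1]{anc3} (the complex analogue), and your reasoning is exactly the expected one: the ratio $\lambda=\alpha'/\alpha=\beta'/\beta$ extends to a nowhere-vanishing holomorphic function on the compact curve by the two no-common-zero hypotheses, hence is a constant in $\C^*$, and the antilinearity of $c_L$ combined with the reality of the sections forces $\bar\lambda=\lambda$, i.e.\ $\lambda\in\R^*$.
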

\begin{proof}
The proof follows the lines of \cite[Proposition 1.1]{anc3}.
\end{proof}
\begin{prop}\label{natmor} There exists a natural map from $\mathcal{M}^{\R}_d(X)$ to the space $\Pic_{\R}^d({X})$ of degree $d$ real line bundles   over $X$. This natural map  is given by $u\in\mathcal{M}^{\R}_d(X)\mapsto u^*\mathcal{O}(1)\in\Pic_{\R}^d({X})$. The fiber   over $L\in\Pic_{\R}^d({X})$ is the  open subset of $\mathbb{P}(\R H^0(X;L)^2)$  given by (the class of) pair of sections $(\alpha,\beta)$ without common zeros.
\end{prop}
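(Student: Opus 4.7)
The plan is to verify separately that (a) pullback of $\mathcal{O}(1)$ gives a well-defined assignment $\mathcal{M}_d^\R(X)\to\Pic_\R^d(X)$, and (b) that the fiber over a fixed $L$ is as described, by exhibiting the inverse correspondence using the two tautological sections of $\mathcal{O}(1)$.

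For (a), I would first equip $\mathcal{O}(1)\to\C\mathbb{P}^1$ with its standard real structure (the one for which the sections $X_0, X_1$ are real). Given $u\in\mathcal{M}_d^\R(X)$, the bundle $u^*\mathcal{O}(1)$ inherits an anti-holomorphic involution by pulling back the real structure on $\mathcal{O}(1)$ along $c_X$ and using the condition $u\circ c_X=\conj\circ u$; one checks routinely that this makes $u^*\mathcal{O}(1)$ a real holomorphic line bundle, and its degree is $d$ since $u$ has degree $d$. The assignment $u\mapsto u^*\mathcal{O}(1)$ therefore lands in $\Pic_\R^d(X)$.

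For (b), fix $L\in\Pic_\R^d(X)$. In one direction, a class $[\alpha:\beta]\in\mathbb{P}(\R H^0(X,L)^2)$ with $\alpha,\beta$ having no common zero determines the morphism $u_{\alpha\beta}:x\mapsto [\alpha(x):\beta(x)]$, which is holomorphic, has degree $d$ (as the divisor of $\alpha$ has degree $d$, so $u_{\alpha\beta}^{-1}([0:1])$ has $d$ points counted with multiplicity), and is real since $\alpha,\beta$ are real sections; the identity $u_{\alpha\beta}^*\mathcal{O}(1)\cong L$ is obtained by noting that $u_{\alpha\beta}^*X_1=\beta$ (up to the chosen isomorphism), so the pullback line bundle is identified with $L$. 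In the converse direction, given $u\in\mathcal{M}_d^\R(X)$ with $u^*\mathcal{O}(1)\cong L$, define $\alpha:=u^*X_0$ and $\beta:=u^*X_1$; these are real sections of $L$ without common zeros (because $X_0,X_1$ have no common zeros on $\C\mathbb{P}^1$), and tautologically $u=u_{\alpha\beta}$. Proposition \ref{realproj} then shows the two pairs produce the same $u$ exactly when they differ by a nonzero real scalar, so the fiber is precisely the subset of $\mathbb{P}(\R H^0(X,L)^2)$ consisting of classes of pairs without common zeros. Openness of this subset follows because the common-zero locus is the projectivization of the image of the multiplication map $\R H^0(X,L\otimes L')\otimes \R H^0(X,L\otimes L'^{-1})\to\R H^0(X,L)$ summed over factorizations, or more directly by noting that having a common zero at some point $x$ is a closed condition in $x$ and in $(\alpha,\beta)$ simultaneously, so the complement is open.

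The only genuinely delicate point, and the one I would expect to require care, is compatibility of the real structures under pullback: one must check that the real structure on $u^*\mathcal{O}(1)$ induced by $\conj$ on $\mathcal{O}(1)$ and $c_X$ on $X$ is anti-holomorphic, complex-antilinear in the fibers, and covers $c_X$; the condition $u\circ c_X=\conj\circ u$ is exactly what makes this work. Everything else is routine, relying only on the fact that $\mathcal{O}(1)$ is globally generated by $X_0,X_1$ and on Proposition \ref{realproj} for uniqueness up to scalar.
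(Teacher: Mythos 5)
Your proposal is correct and follows essentially the same route as the paper's proof: pull back $\mathcal{O}(1)$ with its standard real structure together with the tautological sections $x_0,x_1$, invert via $u_{\alpha\beta}$, and invoke Proposition \ref{realproj} to identify pairs up to a real scalar. You simply spell out more carefully the compatibility of the real structures under pullback and the openness of the complement of the common-zero locus, both of which the paper leaves implicit.
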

\begin{proof}
Given a degree $d$ real branched covering $u:X\rightarrow\C\mathbb{P}^1$, we get a degree $d$ real line bundle   $u^*\mathcal{O}(1)$ over $X$  and the class of two real holomorphic global holomorphic sections without common zeros $[u^*x_0:u^*x_1]\in \mathbb{P}(\R H^0(X;u^*\mathcal{O}(1))^2)$. On the other hand, given a degree $d$ real line bundle $L\rightarrow X$ and two real holomorphic global sections without common zeros $(\alpha,\beta)\in \R H^0(X;L)^2$, then the map $u_{\alpha\beta}:X\rightarrow\C\mathbb{P}^1$ defined by $x\mapsto [\alpha(x):\beta(x)]$ is a degree $d$ real branched covering. Moreover, by Proposition \ref{realproj}, two pairs  $(\alpha,\beta)$ and  $(\alpha',\beta')$ of real holomorphic sections of $L$ define the same real branched covering if and only if  $(\alpha',\beta')=(\lambda\alpha,\lambda\beta)$ for some $\lambda\in\R^*$, hence the result.
\end{proof}
\subsection{Probability on $\mathcal{M}^{\R}_d(X)$}\label{sectproba}
Let $X$ be a real algebraic curve equipped with a compatible volume form $\omega$ of total mass $1$.
In this section, we construct  a natural probability measure on the space $\mathcal{M}^{\R}_d(X)$ of degree $d$ real branched coverings from $X$ to $\mathbb{C}\mathbb{P}^1$.\\
Let $L\in\Pic^d_{\R}(X)$ be a degree $d$ real line bundle equipped with the real Hermitian metric $h$ given by Proposition \ref{metr}. We recall that in Definition \ref{scalarprod} we defined the  $\mathcal{L}^2$-scalar product  on the space $\R H^0(X;L)$ induced by the  Hermitian metric $h$. This $\mathcal{L}^2$-scalar product  induces a scalar product on the Cartesian product  $\R H^0(X;L)^2$ and then  a Fubini-Study metric on  $\mathbb{P}(\R H^0(X;L)^2)$.
We recall that the Fubini-Study metric is constructed as follows. First, we restrict the scalar product to the unit sphere of $\R H^0(X;L)^2$. The obtained metric is invariant under the action of $\mathbb{Z}/2\mathbb{Z}$ and the Fubini-Study metric is then the quotient metric on $\mathbb{P}(\R H^0(X;L)^2)$. 
\begin{defn}\label{measurefiber} Let $L$ be a real holomorphic line bundle over $X$. We denote by  $\mu_{L}$ the probability measure on  $\mathbb{P}(\R H^0(X;L)^2)$ induced by the normalized Fubini-Study volume form. Here, the Fubini-Study metric on $\mathbb{P}(\R H^0(X;L)^2)$ is the one induced by the Hermitian metric on $L$ given by Proposition \ref{metr}.
\end{defn}
\begin{prop}\label{fs} The probability measure $\mu_{L}$ over  $\mathbb{P}(\R H^0(X;L)^2)$ does not depend on the choice of the multiplicative constant in front  of the metric $h$ given by Proposition \ref{metr}.
\end{prop}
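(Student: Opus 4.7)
The plan is to show that rescaling the Hermitian metric $h$ by a positive constant $\lambda>0$ leaves the Fubini--Study probability measure on $\mathbb{P}(\R H^0(X;L)^2)$ unchanged. From the definition of the $\mathcal{L}^2$-scalar product in Definition \ref{scalarprod}, the product is linear in $h$, so replacing $h$ by $\lambda h$ replaces the $\mathcal{L}^2$-scalar product on $\R H^0(X;L)$, and hence the induced scalar product on the Cartesian square $\R H^0(X;L)^2$, by $\lambda$ times itself. The statement then reduces to the following purely linear-algebraic fact: for any finite-dimensional real vector space $V$ equipped with a scalar product $\langle\cdot,\cdot\rangle$, the normalized Fubini--Study volume form on $\mathbb{P}(V)$ depends only on the conformal class of $\langle\cdot,\cdot\rangle$.

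To prove this fact, I would follow the quotient construction used in the paper. Denote by $S_{1}\subset V$ the unit sphere for $\langle\cdot,\cdot\rangle$ and by $S_{\lambda}\subset V$ the unit sphere for $\lambda\langle\cdot,\cdot\rangle$; the latter is the sphere of radius $1/\sqrt{\lambda}$ with respect to the original scalar product. The linear map $\phi:V\to V$, $v\mapsto v/\sqrt{\lambda}$, sends $S_{1}$ diffeomorphically onto $S_{\lambda}$, commutes with the antipodal $\mathbb{Z}/2\mathbb{Z}$-action, and descends to the identity on $\mathbb{P}(V)$. A short computation shows that $\phi$ is an isometry between $(S_{1},\langle\cdot,\cdot\rangle|_{S_{1}})$ and $(S_{\lambda},\lambda\langle\cdot,\cdot\rangle|_{S_{\lambda}})$: for $p\in S_{1}$ and tangent vectors $v,w\in T_{p}S_{1}$,
\[
(\phi^{*}(\lambda\langle\cdot,\cdot\rangle))_{p}(v,w)=\lambda\langle d\phi(v),d\phi(w)\rangle=\lambda\cdot\tfrac{1}{\lambda}\langle v,w\rangle=\langle v,w\rangle.
\]
Passing to the quotient, this isometry descends to the identity map of $\mathbb{P}(V)$ which is then an isometry between the two Fubini--Study Riemannian metrics. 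In particular the two Fubini--Study volume forms on $\mathbb{P}(V)$ agree, and so do the normalized volume forms.

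There is essentially no main obstacle: the statement is a direct consequence of the scale-invariance of the formula $g_{FS}([v])(\xi,\eta)=\langle\xi,\eta\rangle/\langle v,v\rangle$ for the Fubini--Study metric, reformulated at the level of the sphere. The only point requiring minor care is to check that the rescaling map $\phi$ descends to the identity (and not some non-trivial diffeomorphism) of $\mathbb{P}(V)$, which is immediate since $\phi$ is a positive homothety and $\mathbb{P}(V)$ is the quotient of $V\setminus\{0\}$ by all positive (indeed, all non-zero) scalars.
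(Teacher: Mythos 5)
Your proof is correct. The paper does not actually spell out an argument for this proposition (it defers to \cite[Proposition 1.7]{anc3}); your verification --- that the $\mathcal{L}^2$-product scales linearly in $h$, and that $v\mapsto v/\sqrt{\lambda}$ is an isometry from the unit sphere of $\langle\cdot,\cdot\rangle$ onto that of $\lambda\langle\cdot,\cdot\rangle$ commuting with the antipodal action and descending to the identity of $\mathbb{P}(V)$, so the quotient Fubini--Study volume forms coincide --- is the standard argument and supplies exactly what the citation is meant to cover.
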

\begin{proof}
The proof follows the line of \cite[Proposition 1.7]{anc3}
\end{proof}
\begin{oss}
For a real holomorphic line bundle $L$, we denote by $\Lambda_L$  the space of pair of sections $(s_0,s_1)\in \R H^0(X;L)^2$ with at least a common zeros. By \cite[Proposition 2.11]{anc}, the set  $\Lambda_L$ has zero measure (it is an hypersurface), at least if the degree of $L$ is large enough. This implies  that $\mu_L$ induces a probability measure on $\mathbb{P}( \R H^0(X;L)^2\setminus \Lambda_L)$, still denoted by $\mu_L$.  
\end{oss}
\begin{defn}\label{rhfs} We define the  probability measure $\mu_d$  on $\mathcal{M}^{\R}_d(X)$ by the following equality:
$$\int_{\mathcal{M}^{\R}_d(X)}f\textrm{d}\mu_d=\int_{L\in\Pic_{\R}^d(X)}\left(\int_{\mathcal{M}^{\R}_d(X,L)}f\textrm{d}\mu_{L}\right)\dH(L)$$
for any $f\in \mathcal{M}^{\R}_d(X)$ measurable function. Here: 
\begin{itemize}
\item $\mathcal{M}^{\R}_d(X,L)$ is the fiber of the natural morphism $\mathcal{M}^{\R}_d(X)\rightarrow \Pic^d_{\R}(X)$ defined in Proposition \ref{natmor}.
\item  $\mu_{L}$ denotes (by a slight abuse of notation)  the restriction to $\mathcal{M}^{\R}_d(X,L)$ of the probability measure on $\mathbb{P}(\R H^0(X,L)^2)$ defined in Definition \ref{measurefiber}.  
\item $\dH$ denotes the normalized Haar measure on $\Pic^d_{\R}(X)$.
\end{itemize}
\end{defn}
\begin{oss}  The probability measure $\mu_d$ of Definition \ref{rhfs} is the real analogue of the one constructed in the complex setting in \cite{anc3} for the study of random branched coverings from a fixed Riemann surface to $\C \mathbb{P}^1$. Also, in the complex setting, a similar construction has been considered by Zelditch in \cite{zeldlarge}  in order to study large deviations of empirical measures of zeros on a Riemann surface.
\end{oss}

\begin{example} Let us consider the case  $(X,c_X)=(\C\mathbb{P}^1,\conj)$, where $\C\mathbb{P}^1$ is equipped with the Fubini-Study form $\omega_{FS}$. For the projective line $\C\mathbb{P}^1$, the unique degree $d$ real line bundle is  the line bundle $\mathcal{O}(d)$, which is naturally equipped with a real Hermitian metric $h_{d}$ whose curvature equals $d\cdot\omega$. The space of real holomorphic global sections $\R H^0(\C\mathbb{P}^1;\mathcal{O}(d))$ is isomorphic to the space of degree $d$ homogeneous polynomials $\R^{\textrm{hom}}_d[X_0,X_1]$ and the $\mathcal{L}^2$-scalar product coincides with the Kostlan scalar product (i.e. the scalar product which makes $\{\sqrt{\binom{d}{k}}X_0^kX_1^{d-k}\}_{0\leq k \leq d}$ an orthonormal basis, see \cite{ko,ss}). Then, a random  real branched covering $u:\C\mathbb{P}^1\rightarrow\C \mathbb{P}^{1}$ is  given by the class of a pairs of independent Kostlan polynomials. 
\end{example}

\section{Gaussian measures and estimates of higher moments}\label{secmoment}
In this section, we introduce some Gaussian measures on the spaces $\R H^0(X;L)^2$ and $H^0(X;L)^2$, as in \cite{gw1,gwexp,sz,anc}. We follow the notations of Section \ref{ramframwork}. In particular, $(X,c_X)$ is a real algebraic curve whose real locus $\R X$ is not empty.
\subsection{Gaussian measures}\label{secgauss}  In this section, given any degree $d$ real line  $L\in\Pic_{\R}^d(X)$, we equip the cartesian product  $\R H^0(X;L)^2$ of the space of real holomorphic section with  a Gaussian measure  $\gamma_{L}$. In order to do this, 
we fix a  compatible volume  form $\omega$ of total volume $1$  (i.e. $c_X^*\omega=-\omega$ and $\int_X\omega=1$). Given $L\in\Pic_{\R}^d(X)$,  we equip $L$ by the real Hermitian metric $h$ with curvature $d\cdot\omega$ (the metric $h$ is unique up to a multiplicative constant, see Proposition \ref{metr}).\\
In Definition \ref{scalarprod}, we defined  a $\mathcal{L}^2$-Hermitian product on the space $\R H^0(X;L)$ of real holomorphic global holomorphic sections of $L$ denoted by $\langle\cdot,\cdot \rangle_{\mathcal{L}^2}$ and  defined by 
$$\langle\alpha,\beta\rangle_{\mathcal{L}^2}=\int_{x\in X}h_x(\alpha(x),\beta(x))\omega$$
for all $\alpha,\beta$ in $\R H^0(X;L)$. 
\begin{defn} The $\mathcal{L}^2$-scalar product on $\R H^0(X;L)^2$ induces a  Gaussian measure $\gamma_{L}$ on  $\R H^0(X;L)^2$  defined  by
$$\gamma_{L}(A)=\frac{1}{\pi^{N_d}}\int_{(\alpha,\beta)\in A}e^{-\norm{\alpha}_{\mathcal{L}^2}^2-\norm{\beta}_{\mathcal{L}^2}^2}\textrm{d}\alpha \textrm{d}\beta$$
for any open subset  $A\subset \R H^0(X;L)^2$. Here  $\textrm{d}\alpha\textrm{d}\beta$ is the Lebesgue measure on $(\R H^0(X;L)^2;\langle\cdot,\cdot\rangle_{\mathcal{L}^2})$ and $N_d$ denotes the dimension of $\R H^0(X;L)$, which equals the complex dimension of $H^0(X;L)$. 
\end{defn}
\begin{oss}
If $d>2g-2$, where $g$ is the genus of $X$, then $H^1(X;L)=0$ and then, by Riemann-Roch theorem, we have  $N_d=d+1-g$.
\end{oss}
\begin{prop}\cite[Proposition 1.12]{anc3}\label{vs} Let $f$ be a function on an Euclidian space $(V,\langle\cdot,\cdot\rangle)$ which is constant over the lines, i.e. $f(v)=f(\lambda v)$ for all $v\in V$ and all $\lambda\in\R^*$. Denote  by $d\gamma$  the Gaussian measure on $V$ induced by $\langle\cdot,\cdot\rangle$ and by $d\mu$  the normalized Fubini-Study measure on the projectivized $\mathbb{P}(V)$.
Then, for all cones $A\subset V$, we have 
$$\int_{A}f\textrm{d}\gamma=\int_{\mathbb{P}(A)}[f]{d}\mu$$
where $\mathbb{P}(A)$ is the projectivized of $A$ and $[f]$ is the function on $\mathbb{P}(V)$ induced by $f$. 
\end{prop}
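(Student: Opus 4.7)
The natural approach is the standard polar decomposition of a Gaussian measure. Let $N = \dim V$ and let $S(V)$ denote the unit sphere of $(V, \langle\cdot,\cdot\rangle)$. Using the diffeomorphism $V \setminus \{0\} \simeq (0,\infty) \times S(V)$ given by $v \mapsto (\|v\|, v/\|v\|)$, the Lebesgue measure decomposes as $d\alpha \, d\beta = r^{N-1}\, dr\, d\sigma(u)$, where $d\sigma$ is the standard (unnormalized) surface measure on $S(V)$. Consequently the Gaussian measure factorizes as
\begin{equation*}
d\gamma(v) = \frac{1}{\pi^{N/2}} e^{-r^2} r^{N-1}\, dr\, d\sigma(u).
\end{equation*}

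Next, since $A$ is a cone, one has $A \setminus \{0\} = (0,\infty) \cdot (A \cap S(V))$, and by hypothesis $f(v) = f(v/\|v\|)$ depends only on $u = v/\|v\|$. Fubini's theorem then gives
\begin{equation*}
\int_A f\, d\gamma \;=\; \left(\frac{1}{\pi^{N/2}} \int_0^\infty e^{-r^2} r^{N-1}\, dr\right) \int_{A \cap S(V)} f(u)\, d\sigma(u).
\end{equation*}
Taking $f \equiv 1$ and $A = V$ shows that the scalar prefactor equals $1/\mathrm{vol}(S(V))$, so denoting by $d\tilde\sigma$ the normalized uniform probability measure on $S(V)$, we obtain
\begin{equation*}
\int_A f\, d\gamma \;=\; \int_{A \cap S(V)} f(u)\, d\tilde\sigma(u).
\end{equation*}

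Finally, we compare $d\tilde\sigma$ with the Fubini-Study measure $d\mu$ on $\mathbb{P}(V)$. By construction (see the paragraph preceding Definition \ref{measurefiber}), $d\mu$ is exactly the pushforward of $d\tilde\sigma$ under the quotient map $\pi \colon S(V) \to \mathbb{P}(V) = S(V)/\{\pm 1\}$. Since $f$ is constant on lines it is in particular $\pm 1$-invariant on $S(V)$, hence descends to the function $[f]$ on $\mathbb{P}(V)$ and satisfies $\int_{A \cap S(V)} f\, d\tilde\sigma = \int_{\pi(A \cap S(V))} [f]\, d\mu = \int_{\mathbb{P}(A)} [f]\, d\mu$. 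Combining the two identities yields the claim.

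The argument is essentially bookkeeping, so there is no real obstacle beyond checking the normalization constants. The only subtle point is the last step: one must observe that $\pi(A \cap S(V)) = \mathbb{P}(A)$ (which is immediate since $A$ is a cone) and that the pushforward of the normalized spherical measure under $\pi$ coincides with the normalized Fubini--Study measure, an identification that is built into the definition of $\mu_L$ given in Section \ref{sectproba}.
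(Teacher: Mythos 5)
Your argument is correct and complete: the polar factorization of the Gaussian measure, the normalization check via $f\equiv 1$ and $A=V$, and the identification of the pushforward of the normalized spherical measure under the double cover $S(V)\to\mathbb{P}(V)$ with the normalized Fubini--Study measure are exactly the standard steps, and the paper itself gives no proof of Proposition \ref{vs} (it only cites \cite[Proposition 1.12]{anc3}), so there is nothing to diverge from. The one hypothesis you use tacitly is that a ``cone'' here means a subset invariant under all of $\R^*$ and not merely under positive scalings --- this is what makes $A\cap S(V)$ symmetric under $v\mapsto -v$, without which the last equality $\int_{A\cap S(V)}f\,d\tilde\sigma=\int_{\mathbb{P}(A)}[f]\,d\mu$ could fail; it does hold for the cone $\mathcal{C}_{\ell(d)}$ used in the application.
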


We will be also interested in the \emph{complex} Gaussian measure on the space $H^0(X,L)^{2}$. Indeed, the Hermitian metric $h$ on $L$ defines a $\mathcal{L}^2$-Hermitian product on $H^0(X,L)$ by the formula $$\langle\alpha,\beta\rangle_{\mathcal{L}^2}=\int_{x\in\Sigma}h_x(\alpha(x),\beta(x))\omega$$
for all $\alpha,\beta$ in $H^0(\Sigma;L)$.
\begin{defn}\label{complexgaussian}  The complex Gaussian measure $\gamma^{\C}_{L}$ on  $H^0(\Sigma;L)^2$ is defined  by
$$\gamma^{\C}_{L}(A)=\frac{1}{\pi^{2N_d}}\int_{(\alpha,\beta)\in A}e^{-\norm{\alpha}_{\mathcal{L}^2}^2-\norm{\beta}_{\mathcal{L}^2}^2}\textrm{d}\alpha \textrm{d}\beta$$
for any open subset  $A\subset H^0(\Sigma;L)^2$. Here  $\textrm{d}\alpha\textrm{d}\beta$ is the Lebesgue measure on $(H^0(\Sigma;L)^2;\langle\cdot,\cdot\rangle_{\mathcal{L}^2})$ and $N_d$ denotes the complex dimension of $H^0(\Sigma;L)$.
\end{defn}
\subsection{Jet maps and peak sections}\label{secpeak}
Let $F$ and $E$ be respectively  degree $1$ and $0$ real holomorphic line bundles over $X$.
We equip $F$ and $E$ by the real  Hermitian metrics given by Proposition \ref{metr} which we denote by $h_{F}$ and $h_{E}$. In particular the real Hermitian metric $h_d\doteqdot h_{F}^d\otimes h_{E}$ on $F^d\otimes E$ is such that its curvature equals $d\cdot\omega$. Finally, recall that the space $H^0(X,F^d\otimes E)$ is endowed with the $\mathcal{L}^2$-Hermitian product $$\langle\alpha,\beta\rangle_{\mathcal{L}^2}=\int_{x\in X}h_d(\alpha(x),\beta(x))\omega$$ defined by
for any $\alpha,\beta$ in $H^0(X;F^d\otimes E).$ 

\begin{defn}\label{defn eval} For any  $x\in X$, let $H_x$ be the kernel of the  map  ${s\in  H^0(X,F^d\otimes E)\mapsto s(x)\in (F^d\otimes E)_x}$. 
Similarly, we denote by $H_{2x}$ the kernel of the map   $s\in H_{x}\mapsto\nabla s(x)\in (F^d\otimes E)_x\otimes T^*_{X,x}.$
We define the following jet maps:
$$ev_x:s\in H^0(X,F^d\otimes E)/H_x\mapsto s(x)\in(F^d\otimes E)_x,$$
$$ev_{2x}:s\in H_x/H_{2x}\mapsto \nabla s(x)\in (F^d\otimes E)_x\otimes T^*_{X,x}.$$
\end{defn}
The previous definition has the following real analogue:
\begin{defn}\label{defn real eval}
For any point $x\in X$, we define the real  vector spaces $\R H^0_x=H^0_x\cap \R H^0(X, F^d\otimes E)$ and $\R H^0_{2x}=H^0_{2x}\cap \R H^0(X, F^d\otimes E)$
and the real jet maps by
 $$ev^{\R}_{x}:s\in \R H^0(X,F^d\otimes E)/\R H^0_x\mapsto s(x)\in(F^d\otimes E)_x,$$
 $$ev^{\R}_{2x}:s\in \R H_x/\R H_{2x}\mapsto \nabla s(x)\in (F^d\otimes E)_x\otimes T^*_{X,x}.$$
\end{defn}
By the fact that $F$ is ample (recall that $\deg{F}=1$), we get that for $d$ large enough the maps  $ev^{\R}_x$, $ev_x$, $ev^{\R}_{2x}$ and $ev_{2x}$ are invertible. The following proposition estimates the norms of this maps and of their inverses.
\begin{prop}\cite[Propositions 4 and 6]{gw1}\label{confronto evalu} For any $B>0$, then there exists an integer  $d_B$ and a positive constant $c_B$ such that,  for any $d\geq d_B$ and any point $x\in X$ with $\textrm{dist}(x,\R X)\geq B\frac{\log d}{\sqrt{d}}$, the maps $d^{-\frac{1}{2}}ev^{\R}_x$, $d^{-\frac{1}{2}}ev_x$, $d^{-1}ev^{\R}_{2x}$ and $d^{-1}ev_{2x}$ as well as their inverse have norms and determinants bounded from above by $c_B$.
\end{prop}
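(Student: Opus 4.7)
The plan is to adapt the peak section argument of \cite{gw1} from the complex to the real setting. The complex case follows directly from the Tian--Bouche--Zelditch peak section theory for the positive line bundle $F^d\otimes E$: around any point $x\in X$, there exist peak sections $\sigma_x^{(0)}, \sigma_x^{(1)}\in H^0(X,F^d\otimes E)$ of $\mathcal{L}^2$-norm one with $|\sigma_x^{(0)}(x)|^2\sim d/\pi$ (up to the explicit constant coming from $\omega$) and $\sigma_x^{(1)}\in H_x$ with $|\nabla \sigma_x^{(1)}(x)|^2\sim d^2$. These provide upper bounds on $d^{-1/2}ev_x$ and $d^{-1}ev_{2x}$; the bounds on the inverse maps come from the Bergman kernel reproducing property combined with the Tian expansion $K_d(x,x)\sim d/\pi$ (and the analogous expansion for the first jet kernel). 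Determinants are bounded as a consequence since the maps are essentially $1$-dimensional over~$\C$.

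For the real versions, the plan is to \emph{symmetrize} peak sections. Given a complex peak section $\sigma$, the section $\bar\sigma(y):=c_L(\sigma(c_X(y)))$ is again holomorphic, and a direct computation using that $c_L$ is $\C$-antilinear shows that both $\sigma+\bar\sigma$ and $i(\sigma-\bar\sigma)$ lie in $\R H^0(X,F^d\otimes E)$. The point of the distance hypothesis is that $\bar\sigma$ is concentrated at $c_X(x)$, and peak sections decay Gaussianly at rate $e^{-c\,d\,\mathrm{dist}^2(\cdot,x)}$; since $\mathrm{dist}(x,c_X(x))\geq 2B\log d/\sqrt d$, both the pointwise value $|\bar\sigma(x)|$ and the off-diagonal $\mathcal{L}^2$-overlap $|\langle\sigma,\bar\sigma\rangle_{\mathcal{L}^2}|$ are $o(d^{-N})$ for every $N$. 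Consequently the two real sections $\sigma_x^{(0)}\pm\bar\sigma_x^{(0)}$ and $\pm i(\sigma_x^{(0)}-\bar\sigma_x^{(0)})$ have $\mathcal{L}^2$-norm close to $\sqrt 2$ and their values at $x$ form a basis of $(F^d\otimes E)_x$ (viewed as a real $2$-plane) which is a small perturbation of $\{\sigma_x^{(0)}(x),\, i\sigma_x^{(0)}(x)\}$. Relative to the real orthonormal frame induced by $h_d$, the matrix of $d^{-1/2}ev^{\R}_x$ is therefore close to an orthogonal matrix of size $O(1)$, which simultaneously controls its norm, the norm of its inverse, and its determinant.

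The treatment of $ev^{\R}_{2x}$ is entirely parallel: start from complex peak sections for the first jet (built from Hörmander $\bar\partial$-estimates with a weight vanishing to second order at $x$), symmetrize them via the same $\sigma\mapsto \sigma\pm\bar\sigma$ construction, and use that the real subspaces $\R H_x/\R H_{2x}$ and $(F^d\otimes E)_x\otimes T^*_{X,x}$ are both $2$-dimensional over $\R$ when $x\notin \R X$.

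The main obstacle is checking that the exponentially small corrections produced by the conjugate peak section $\bar\sigma$ are in fact negligible next to the polynomial factors $d^{-1/2}$ and $d^{-1}$ appearing in the normalisations. This is exactly where the threshold $B\log d/\sqrt d$ is sharp: plugging it into the Gaussian decay yields $e^{-4cB^2(\log d)^2}=d^{-4cB^2\log d}$, which beats every polynomial in $d$ uniformly for $d\geq d_B$. All the error terms in the matrix of $d^{-1/2}ev^{\R}_x$ (respectively $d^{-1}ev^{\R}_{2x}$) are of this super-polynomial type, so both the direct and inverse estimates stabilize to a constant $c_B$ depending only on $B$.
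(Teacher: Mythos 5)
Your argument is correct and is essentially the proof behind the cited result: the paper does not reprove Proposition~\ref{confronto evalu} but refers to \cite[Propositions 4 and 6]{gw1}, whose argument is exactly the peak-section/Bergman-kernel scheme you describe, with real sections obtained by symmetrizing $\sigma\mapsto\sigma\pm\bar\sigma$ and with the value and $\mathcal{L}^2$-overlap of the conjugate peak section at $c_X(x)$ controlled by the near-diagonal Gaussian decay of the Bergman kernel. Your identification of the threshold $\mathrm{dist}(x,\R X)\geq B\frac{\log d}{\sqrt{d}}$ as the point where $e^{-cd\,\mathrm{dist}^2}$ becomes super-polynomial is precisely the mechanism used there, and is also why the paper's remark that the statement extends to arbitrary $B>0$ and arbitrary $E\in\Pic^0_{\R}(X)$ is justified by the general Bergman kernel expansions of \cite{daima,ma2}.
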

\begin{oss} In \cite[Propositions 4 and 6]{gw1}, the constant $B$ equals $1$, and the line bundle $E$ is trivial.  The same proof  actually holds for any fixed $B>0$ and any $E\in\Pic^0_{\R}(X)$. Indeed,  the proof is based on  the theory peak sections and Bergman kernels and this theory holds   in this more general setting (see for example \cite{daima} or \cite[Theorem 4.2.1]{ma2}).
\end{oss}
Using the $\mathcal{L}^2$-Hermitian product on $H^0(X,F^d\otimes E)$, we can identify $H^0(X,F^d\otimes E)/H_x$ with the orthogonal complement of $H_x$ in $H^0(X,F^d\otimes E)$. Similarly, we identify the quotient $H_x/H_{2x}$ with the orthogonal complement of $H_{2x}$ in $H_x$. We then have an orthogonal decomposition $$H^0(X,F^d\otimes E)=  H^0(X,F^d\otimes E)/H_x\oplus H_x/H_{2x} \oplus H_{2x}.$$ Similarly, using the $\mathcal{L}^2$-scalar product on  $\R  H^0(X,F^d\otimes E)$, we have the orthogonal decomposition $$\R H^0(X,F^d\otimes E)= \R H^0(X,F^d\otimes E)/\R H_x\oplus \R H_x/\R H_{2x} \oplus \R H_{2x}.$$ The map $ev_x\times ev_{2x}$  (resp. $ev^{\R}_x\times ev^{\R}_{2x}$) gives an isomorphism between $H^0(X,F^d\otimes E)/H_x\oplus H_x/H_{2x}$  (resp. $\R H^0(X,F^d\otimes E)/\R H_x\oplus \R H_x/\R H_{2x}$) and the fiber $(F^d\otimes E)_x\oplus (F^d\otimes E)_x\otimes T^*_{X,x}$. \\
Moreover, remark that we have natural identifications $H^0(X,F^d\otimes E)/H_x\oplus H_x/H_{2x}=H_{2x}^{\perp}$ and $\R H^0(X,F^d\otimes E)/\R H_x\oplus \R H_x/\R H_{2x}=\R H_{2x}^{\perp}$.
A direct consequence of Proposition \ref{confronto evalu} is the following
\begin{cor}\label{bounded determinant}
For any $B>0$,  there exist an integer  $d_B$ and a positive constant $c_B$ such that,  for any $d\geq d_B$ and any $x$ with $\textrm{dist}(x,\R X)\geq B\frac{\log d}{\sqrt{d}}$, the map $(ev^{\R}_x\times ev^{\R}_{2x})^{-1}\circ(ev_x\times ev_{2x}):H_{2x}^{\perp}\rightarrow \R H_{2x}^{\perp}$   has determinant bounded from above by $c_B$ and from below by $1/c_B$.
\end{cor}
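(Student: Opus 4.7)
The plan is to express the determinant of the composition as a ratio involving the determinants of the four jet maps and to control each of them via Proposition \ref{confronto evalu}. First, I would verify that both $H_{2x}^\perp$ and $\R H_{2x}^\perp$ are real vector spaces of real dimension $4$. The former is complex of complex dimension $2$. For the latter, since for $d\geq d_B$ the maps $ev^{\R}_x$ and $ev^{\R}_{2x}$ are isomorphisms by Proposition \ref{confronto evalu}, each of the two real summands $\R H^0/\R H_x$ and $\R H_x/\R H_{2x}$ is identified with the real $2$-dimensional fiber $(F^d\otimes E)_x$, respectively $(F^d\otimes E)_x\otimes T^*_{X,x}$. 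Consequently, the composition $(ev^{\R}_x\times ev^{\R}_{2x})^{-1}\circ(ev_x\times ev_{2x})$ is a real-linear endomorphism between real $4$-dimensional spaces, and its determinant makes sense.

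Using the orthogonal direct sum decompositions on source, target and intermediate fiber, the determinant factors multiplicatively:
\[
\det\nolimits_{\R}\bigl((ev^{\R}_x\times ev^{\R}_{2x})^{-1}\circ(ev_x\times ev_{2x})\bigr)=\frac{\det_{\R}(ev_x)\,\det_{\R}(ev_{2x})}{\det_{\R}(ev^{\R}_x)\,\det_{\R}(ev^{\R}_{2x})}.
\]
Each of the four determinants in this ratio is the real determinant of a linear map between real $2$-dimensional spaces, so Proposition \ref{confronto evalu} applies in a completely symmetric way to the complex and real sides.

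Finally, I would use the proposition as follows. For a linear map between real $2$-dimensional spaces, multiplying the map by $d^{-1/2}$ (respectively $d^{-1}$) divides its real determinant by $d$ (respectively $d^{2}$). Since Proposition \ref{confronto evalu} states that $d^{-1/2}ev_x$, $d^{-1/2}ev^{\R}_x$, $d^{-1}ev_{2x}$, $d^{-1}ev^{\R}_{2x}$ and their inverses all have determinant bounded by $c_B$, I would conclude that for $d\geq d_B$,
\[
\tfrac{d}{c_B^2}\leq|\det\nolimits_{\R}(ev_x)|,\ |\det\nolimits_{\R}(ev^{\R}_x)|\leq c_B^2\,d,\qquad \tfrac{d^2}{c_B^2}\leq|\det\nolimits_{\R}(ev_{2x})|,\ |\det\nolimits_{\R}(ev^{\R}_{2x})|\leq c_B^2\,d^2.
\]
Numerator and denominator are thus each of order $d^{3}$, and the ratio lies in $[c_B^{-4},c_B^{4}]$, giving the statement with $c_B$ replaced by $c_B^{4}$. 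This is essentially a bookkeeping argument; the only subtle point is correctly matching real and complex dimensions, but there is no substantive analytic obstacle, the real work having already been done in \cite[Propositions~4 and~6]{gw1}.
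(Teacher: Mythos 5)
Your argument is correct and is exactly the computation the paper leaves implicit when it calls the corollary ``a direct consequence of Proposition \ref{confronto evalu}'': the map $(ev^{\R}_x\times ev^{\R}_{2x})^{-1}\circ(ev_x\times ev_{2x})$ is block diagonal with respect to the orthogonal decompositions, so its determinant is the ratio you write, and the scaling by $d^{-1/2}$ and $d^{-1}$ cancels between numerator and denominator. The dimension bookkeeping (all four jet maps are between real $2$-dimensional spaces for $d\geq d_B$) is handled correctly, so the proposal is a valid filling-in of the paper's one-line justification.
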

\begin{defn}\label{defin peak} We denote by $s_0$ and $s_1$ the global holomorphic sections of $L^d\otimes E$ of unit $\mathcal{L}^2$-norm which generates respectively the orthogonal of $H_x$ in $H^0(X,F^d\otimes E)$ and the orthogonal of $H_{2x}$ in $H_x$.  We call these sections the \emph{peak sections} at $x$.
\end{defn}
 The pointwise estimate of the norms (with respect to the Hermitian metric $h_d$ of curvature $d\cdot\omega$) of the peak sections are well known and strictly related to the estimates of the Bergman kernel along the diagonal (see \cite{tian,zel,ber1,ma2}). With a slight abuse of notation, we will denote by $\norm{\cdot}$ any norm induced by $h_d$.
\begin{lemma}(\cite[Proposition 1.5]{anc3})\label{asympeak} For any $x\in X$, let $s_0$ and $s_1$ be the peak sections defined in Definition \ref{defin peak}. Then, as $d\rightarrow +\infty$, we have the estimates $\norm{s_0(x)}=\frac{\sqrt{d}}{\sqrt{\pi}}(1+O(d^{-1}))$ and 
$\norm{\nabla s_1(x)}=\frac{d}{\sqrt{\pi}}(1+O(d^{-1}))$, where the error terms are uniform in $x\in X$.
%Moreover,  for any  $B>0$, there exists a positive constant $c(B)$ such that, for any point $y\in X$ with $\textrm{dist}(y,x)\geq B\frac{\log d}{\sqrt{d}}$, we have $\norm{s_i(y)}=\norm{\nabla s_i(y)}=O(d^{-c(B)})$,  $i\in\{0,1\}$.  
\end{lemma}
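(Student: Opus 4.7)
The plan is to reduce both estimates to the asymptotics of the Bergman kernel (and its first jet) on the diagonal. First I would observe that, by definition, $s_0$ is the unit-norm generator of the orthogonal complement of $H_x$ in $H^0(X,F^d\otimes E)$. If we complete $\{s_0\}$ to a $\mathcal{L}^2$-orthonormal basis $\{s_0,t_1,\dots,t_{N_d-1}\}$ by choosing an orthonormal basis of $H_x$, then by construction $t_i(x)=0$ for all $i\geq 1$, so the on-diagonal Bergman kernel reduces to
\[
K_d(x,x)\doteqdot\sum_{i}\norm{s_i(x)}^2=\norm{s_0(x)}^2.
\]
Hence the first estimate is equivalent to the Tian--Zelditch--Catlin asymptotic expansion of the Bergman kernel on the diagonal. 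Since the metric $h_d$ has curvature $d\cdot\omega$ and we integrate against the same form $\omega$, the leading coefficient in dimension one is $1/\pi$, i.e.\ $K_d(x,x)=\tfrac{d}{\pi}(1+O(d^{-1}))$ uniformly in $x\in X$; see Tian's original computation or the general form in \cite{ma2} adapted to the present metric normalization. Taking the square root gives the first statement.

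For the gradient estimate I would repeat the same argument, but one order higher. By definition $s_1$ is the unit-norm generator of $H_x/H_{2x}$, so I complete $\{s_1\}$ to an orthonormal basis of $H_x$ using an orthonormal basis of $H_{2x}$; every element of $H_{2x}$ has vanishing $1$-jet at $x$, hence its covariant derivative at $x$ is zero. Consequently
\[
\sum_{s\in\mathrm{ONB}(H_x)}\norm{\nabla s(x)}^2=\norm{\nabla s_1(x)}^2,
\]
and combining with the previous step the left-hand side equals the "jet Bergman kernel" $K_d^{(1)}(x,x)\doteqdot\sum_i\norm{\nabla s_i(x)}^2$ of the full space $H^0(X,F^d\otimes E)$, up to subtracting the contribution of $s_0$ which is $O(d)$ and hence lower order. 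The asymptotic expansion of $K_d^{(1)}(x,x)$ follows either by differentiating the off-diagonal Bergman kernel in both variables and restricting to the diagonal, or by reproducing Tian's peak-section construction for the $1$-jet in Kähler normal coordinates centered at $x$. Both routes give $K_d^{(1)}(x,x)=\tfrac{d^2}{\pi}(1+O(d^{-1}))$ uniformly in $x$, from which the claim follows.

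I expect the main technical obstacle to be pinning down the correct leading constant $1/\pi$ for the jet Bergman kernel rather than the on-diagonal Bergman kernel itself: the scaling is $d$ from evaluation and $d$ from one covariant derivative, but one must verify that the cross term is negligible and that no additional curvature correction enters at the leading order. In Kähler normal coordinates the model peak section is essentially the Gaussian $e^{-d|z|^2/2}$ up to lower-order corrections, whose $1$-jet at the origin is computable exactly; this model calculation, together with the uniform remainder estimates of the Bergman kernel expansion on the compact curve $X$, yields both the constant $1/\pi$ and the uniformity of the $O(d^{-1})$ remainder in $x$.
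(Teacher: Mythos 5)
The paper does not actually prove this lemma---it is imported verbatim from \cite[Proposition 1.5]{anc3}---and your reduction to on-diagonal Bergman kernel asymptotics is precisely the standard route that reference (and \cite{gw1}) takes, so structurally your argument is sound. The identities $\norm{s_0(x)}^2=K_d(x,x)$ and $\norm{\nabla s_1(x)}^2=\sum_{s\in\mathrm{ONB}(H_x)}\norm{\nabla s(x)}^2$ are correct; for the second you should say explicitly that $\nabla s(x)$ is independent of the connection when $s\in H_x$ (because $s(x)=0$), and that the contribution of $s_0$ to the full jet kernel is computed with respect to a \emph{fixed} connection (for the Chern connection it is in fact negligible at the peak point, consistent with your $O(d)$ bound, which in any case sits inside the allowed $d^2\cdot O(d^{-1})$ error).

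The genuine gap is exactly the point you flag as ``the main technical obstacle'' and then assert rather than verify: the leading constant. Under the normalization stated in \emph{this} paper---$\int_X\omega=1$ and $\langle\alpha,\beta\rangle_{\mathcal{L}^2}=\int_X h_x(\alpha,\beta)\,\omega$---the trace identity $\int_X K_d(x,x)\,\omega=N_d=d+1-g$ forces $K_d(x,x)=d\,(1+O(d^{-1}))$ uniformly, i.e.\ leading constant $1$, not $1/\pi$. The constant $1/\pi$ is the one obtained in the convention of \cite{gw1} and \cite{anc3}, where the volume form is the curvature form of $(F,h_F)$ itself, of total mass $\pi$ for a degree-one bundle (compare $\C\mathbb{P}^1$ with the \emph{unnormalized} Fubini--Study form). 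So your sentence ``since the metric $h_d$ has curvature $d\cdot\omega$ and we integrate against the same form $\omega$, the leading coefficient is $1/\pi$'' cannot be right as stated: you must either carry out the model Gaussian computation while tracking which total mass of $\omega$ is in force, or note that the precise constant is immaterial for the application, being absorbed into the normalizing factor $\frac{\pi}{d^{3/2}}$ and the $O(1)$ term in the Poincar\'e--Lelong step. The same caveat applies to $K_d^{(1)}(x,x)=\frac{d^2}{\pi}(1+O(d^{-1}))$: the extra factor of $d$ per covariant derivative is correct, but the constant also depends on which metric on $T^*_{X,x}$ is used to measure $\nabla s_1(x)$, which neither you nor the paper pins down.
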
 

\subsection{Wronskian and higher moments}
Let $F$ and $E$ be respectively  degree $1$ and $0$ real holomorphic line bundles over $X$.
The purpose of this section is to prove Proposition \ref{moments}, which gives  key estimates of the higher moments of the random variable $(\alpha,\beta)\in \R H^0(X,F^d\otimes E)^2\mapsto \log \norm{\frac{\pi}{d^{3/2}}W_{\alpha\beta}(x)}$, where $W_{\alpha\beta}$ is the Wronskian, given by the following  
\begin{defn}\label{wr} Let $\nabla$ be a connection on $F^d\otimes E$. For any pair of real holomorphic global sections $(\alpha,\beta)\in \R H^0(X,F^d\otimes E)^2$, we denote by $W_{\alpha\beta}$ the Wronskian $\alpha\otimes\nabla\beta-\beta\otimes\nabla\alpha$, which is a real holomorphic global section of $F^{2d}\otimes E^2\otimes T^*_{X}$.
\end{defn}
\begin{oss}   The Wronskian $W_{\alpha\beta}$ does not depend on the choice of a connection on $F^d\otimes E$. Indeed, two connections $\nabla$ and $\nabla'$ on $F^d\otimes E$ differ by a $1$-form $\theta$, and then we have $$(\alpha\otimes\nabla\beta-\beta\otimes\nabla\alpha)-(\alpha\nabla'\beta-\beta\nabla'\alpha)=\alpha\otimes(\nabla-\nabla')\beta-\beta\otimes(\nabla-\nabla')\alpha=\alpha\otimes\beta\otimes\theta-\beta\otimes\alpha\otimes\theta=0.$$
\end{oss}
\begin{prop}\cite[Proposition 2.3]{anc3}\label{wronskiancritical} Let $F$ and $E$ be respectively  degree $1$ and $0$ real line bundles over $X$ and $(\alpha,\beta)\in \R H^0(X,F^d\otimes E)^2$ be a pair of sections without common zeros. A point $x\in X$ is a critical point of the map $u_{\alpha\beta}:x\in X\mapsto [\alpha(x):\beta(x)]\in\C\mathbb{P}^1$ if and only if it is a zero of the Wronskian $W_{\alpha\beta}$ defined in Definition \ref{wr}.
\end{prop}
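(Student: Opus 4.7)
The statement is local, so my plan is to reduce to a coordinate computation in a local trivialization and then check that the intrinsic object recovered is indeed the Wronskian. Fix a point $x_0\in X$ and a local holomorphic coordinate $z$ centered at $x_0$, together with a local holomorphic frame $e$ of $F^d\otimes E$ near $x_0$. Write $\alpha=a(z)\,e$ and $\beta=b(z)\,e$ for holomorphic functions $a,b$. Since $(\alpha,\beta)$ have no common zeros, after possibly swapping the two homogeneous coordinates on $\C\mathbb{P}^1$ we may assume $b(x_0)\neq 0$, so that near $x_0$ the map $u_{\alpha\beta}$ lands in the affine chart $\{x_1\neq 0\}$ and is given by $z\mapsto a(z)/b(z)\in\C$.

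In this chart a direct computation gives
\[
\frac{d}{dz}\Bigl(\frac{a(z)}{b(z)}\Bigr)=\frac{a'(z)b(z)-a(z)b'(z)}{b(z)^2}.
\]
Hence $x_0$ is a critical point of $u_{\alpha\beta}$ if and only if $a'(x_0)b(x_0)-a(x_0)b'(x_0)=0$, since $b(x_0)\neq 0$.

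It remains to match this local expression with the intrinsic Wronskian $W_{\alpha\beta}=\alpha\otimes\nabla\beta-\beta\otimes\nabla\alpha$. As recalled in the excerpt, $W_{\alpha\beta}$ is independent of the choice of connection $\nabla$ on $F^d\otimes E$, so I may choose $\nabla$ to be the trivial connection $d$ in the frame $e$, in which case $\nabla\alpha=a'(z)\,dz\otimes e$ and $\nabla\beta=b'(z)\,dz\otimes e$. Then
\[
W_{\alpha\beta}(x_0)=\bigl(a(x_0)b'(x_0)-b(x_0)a'(x_0)\bigr)\,e\otimes e\otimes dz\in (F^d\otimes E)_{x_0}^{\otimes 2}\otimes T^*_{X,x_0},
\]
and since $e\otimes e\otimes dz$ is a nonzero element of the fiber, $W_{\alpha\beta}(x_0)=0$ if and only if $a'(x_0)b(x_0)-a(x_0)b'(x_0)=0$, which is exactly the critical point condition established above. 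The symmetric case $a(x_0)\neq 0$ is identical in the opposite chart.

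The only subtle point is the bookkeeping that identifies the locally computed derivative $(a'b-ab')/b^2$ with the coordinate-free section $W_{\alpha\beta}$ of $F^{2d}\otimes E^2\otimes T^*_X$; this is precisely handled by the connection-independence remark, which lets us pick the flat connection adapted to the frame and makes the two expressions literally match up to the nonvanishing factor $e\otimes e\otimes dz$. Beyond this identification, the proof is just the elementary quotient-rule computation.
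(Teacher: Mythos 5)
Your proof is correct: the reduction to a local frame and chart, the quotient-rule computation, and the use of connection-independence to justify computing $W_{\alpha\beta}$ with the flat connection of the frame are exactly what is needed, and the case split on which homogeneous coordinate is nonvanishing is handled properly since swapping coordinates is a biholomorphism of the target and only changes $W_{\alpha\beta}$ by a sign. The paper itself gives no proof here, deferring to \cite[Proposition 2.3]{anc3}, and your argument is the standard one that result rests on.
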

\begin{prop}\label{moments} Let $X$ be a real algebraic curve equipped with a compatible volume form $\omega$ of total volume $1$ and let $F\in\Pic^1_{\R}(X)$. For any $B>0$ there exists an integer $d_B$ and a constant $c_B$ such that for any $E\in \Pic^0_{\R}(X)$, any $m\in\mathbb{N}$,  any $d\geq d_B$  and any point $x\in X$ with $\textrm{dist}(x,\R X)\geq B\frac{\log d}{\sqrt{d}}$, we have
$$\int_{(\alpha,\beta)\in \R H^0(X,F^d\otimes E)^2}\abs{\log \norm{\frac{\pi}{d^{3/2}}W_{\alpha\beta}(x)}}^m\textrm{d}\gamma_d(\alpha,\beta)\leq  c_B(m+1)!.$$
Here, $\textrm{dist}(\cdot,\cdot)$ is the distance in $X$ induced by $\omega$,
$\gamma_d$ is the Gaussian measure on $\R H^0(X,F^d\otimes E)^2$  constructed in Section \ref{secgauss} and $\norm{\cdot}$ denote the norm induced by the Hermitian metrics on $F$ and  $E$ given by Proposition \ref{metr}.
\end{prop}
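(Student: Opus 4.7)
The plan is to reduce the statement to a small-ball estimate for a quadratic polynomial in a real Gaussian vector of uniformly controlled covariance, following the strategy of \cite{gwexp}. First I would write $W_{\alpha\beta}(x)$ explicitly in terms of the $1$-jets of $\alpha$ and $\beta$. Fix unit frames $e\in (F^d\otimes E)_x$ and $\theta\in T^*_{X,x}$, and set $\alpha(x)=\xi_\alpha\,e$, $\nabla\alpha(x)=\eta_\alpha\,e\otimes\theta$, with analogous notation for $\beta$. By Definition \ref{wr} a direct computation gives
$$W_{\alpha\beta}(x) = (\xi_\alpha\eta_\beta - \xi_\beta\eta_\alpha)\,e\otimes e\otimes\theta, \qquad \|W_{\alpha\beta}(x)\| = |\xi_\alpha\eta_\beta - \xi_\beta\eta_\alpha|.$$
So the whole quantity to control is the logarithm of a quadratic form in the jets.

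Next I would transport $\gamma_d$ to these jet coordinates and rescale by the natural powers of $d$. Setting $\tilde\xi := d^{-1/2}\xi$ and $\tilde\eta := d^{-1}\eta$, Proposition \ref{confronto evalu} combined with Corollary \ref{bounded determinant} ensures that the real linear map $\alpha\mapsto (\tilde\xi_\alpha,\tilde\eta_\alpha)$, restricted to $\R H_{2x}^\perp\subset \R H^0(X,F^d\otimes E)$, is an isomorphism onto $(F^d\otimes E)_x\oplus (F^d\otimes E)_x\otimes T^*_{X,x}\cong \R^4$ with operator norm and inverse norm bounded by a constant depending only on $B$. Consequently the pushforward of $\gamma_d$ under the joint jet map is a centered real Gaussian $Z=(\tilde\xi_\alpha,\tilde\eta_\alpha,\tilde\xi_\beta,\tilde\eta_\beta)\in\R^8$ whose covariance matrix $\Sigma$ satisfies $c_B^{-2}I\leq\Sigma\leq c_B^{2}I$. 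In these variables
$$\frac{\pi}{d^{3/2}}\|W_{\alpha\beta}(x)\| \;=\; \pi\,|\tilde\xi_\alpha\tilde\eta_\beta - \tilde\xi_\beta\tilde\eta_\alpha| \;=:\; \pi\,|P(Z)|,$$
and the factor $\pi$ contributes only an additive shift $\log\pi$ to $\log|P(Z)|$. It suffices therefore to prove $\mathbb{E}|\log|P(Z)||^m\leq c_B(m+1)!$ for the quadratic polynomial $P$.

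I would then bound the upper and lower tails of $\log|P(Z)|$ separately. The upper tail is routine: $|P(Z)|\leq 2(\max_i|Z_i|)^2$ and sub-Gaussian tails of $\max_i|Z_i|$ yield $\Pr(\log|P(Z)|>t)\leq Ce^{-c\,e^{t}}$, so $\mathbb{E}((\log|P(Z)|)_+)^m$ is bounded by a constant (in fact independent of $m$ up to a geometric factor). The lower tail is the heart of the argument. I would condition on $(\tilde\xi_\alpha,\tilde\eta_\alpha)=:(a,b)\in\C^2$: since $\alpha$ and $\beta$ are independent, the conditional law of $(\tilde\xi_\beta,\tilde\eta_\beta)\in\R^4$ is again a non-degenerate Gaussian with covariance controlled by $c_B$, and the map $(u,v)\mapsto av-bu$ is a $\C$-linear surjection onto $\C$ as soon as $(a,b)\neq 0$. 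Pushforward by this map gives a Gaussian on $\R^2$ whose covariance is of order $|(a,b)|^2$, so its density is bounded by $C/|(a,b)|^2$. Hence $\Pr(|P(Z)|<\epsilon\mid (a,b))\leq C\epsilon^2/|(a,b)|^2$. Integrating against the bounded Gaussian density of $(a,b)$, splitting at $|(a,b)|=\sqrt{C}\epsilon$ and using Gaussian decay, yields $\Pr(|P(Z)|<\epsilon)\leq C\epsilon^2$. Therefore $\Pr(\log|P(Z)|<-t)\leq Ce^{-2t}$ and
$$\mathbb{E}\bigl((\log|P(Z)|)_-\bigr)^m \;\leq\; C\int_0^\infty m\,t^{m-1}e^{-2t}\,dt \;\leq\; C_B\,m!\,2^{-m}.$$
Adding the two contributions produces the announced bound $c_B(m+1)!$.

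The main obstacle is the lower-tail anti-concentration, and especially its uniformity in $d$, in the point $x$, and in $E\in\Pic^0_\R(X)$. This uniformity is exactly what Corollary \ref{bounded determinant} provides, keeping the eigenvalues of $\Sigma$ in a fixed compact subset of $(0,\infty)$ that depends only on $B$; and the hypothesis $\textrm{dist}(x,\R X)\geq B(\log d)/\sqrt{d}$ is precisely the range in which Proposition \ref{confronto evalu} is available, so it is the sole place where the growth condition on $\ell(d)$ in Theorem \ref{rarefaction} enters.
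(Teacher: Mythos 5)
Your argument is correct in outline but takes a genuinely different route from the paper's, so let me first compare. Both proofs begin the same way: the integrand depends only on the $1$-jet of $(\alpha,\beta)$ at $x$, so everything reduces to a Gaussian on $\R H_{2x}^{\perp}\times\R H_{2x}^{\perp}$, with uniformity in $d$, $x$ and $E$ supplied by Proposition \ref{confronto evalu}. From there the paper does not estimate tails: it passes (via the determinant bound of Corollary \ref{bounded determinant}) to the \emph{complex} Gaussian, expands $\alpha,\beta$ in the peak-section basis $s_0,s_1$, uses Lemma \ref{asympeak} to get the exact normalization $\norm{W_{\alpha\beta}(x)}=\abs{a_0b_1-a_1b_0}\,\frac{d^{3/2}}{\pi}(1+o(1))$ (note that $\nabla s_0(x)$ drops out because it multiplies $s_1(x)=0$), and then computes $\int\abs{\log\abs{a_0b_1-a_1b_0}}^m$ against the standard complex Gaussian explicitly, by a unitary change of variables, polar coordinates and the recursion \eqref{logrecursion}; this is what produces the clean $(m+1)!$. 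You instead keep a real Gaussian $Z\in\R^8$ with covariance pinched between $c_B^{-2}I$ and $c_B^{2}I$ and deduce the moments from two tail bounds: a doubly exponential upper tail and the anti-concentration estimate $\Pr(\abs{P(Z)}<\epsilon)\leq C_B\epsilon^2$ obtained by conditioning on $(\tilde\xi_\alpha,\tilde\eta_\alpha)$. Your conditioning step is sound: for $(a,b)\neq 0$ the map $(u,v)\mapsto av-bu$ pushes the conditional Gaussian to a planar Gaussian with covariance at least $c_B^{-2}(\abs{a}^2+\abs{b}^2)I$, and $\abs{(a,b)}^{-2}$ is integrable against a bounded density on $\R^4$. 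Your route is more robust — it needs only two-sided covariance control, never the precise Bergman-kernel asymptotics of Lemma \ref{asympeak}, and never the complex Gaussian — at the price of less explicit constants.

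Two points need attention. First, the covariance control you attribute to Proposition \ref{confronto evalu} and Corollary \ref{bounded determinant} is not literally what they provide: those statements bound the two quotient maps $ev_x^{\R}$ (on $\R H^0/\R H_x$) and $ev_{2x}^{\R}$ (on $\R H_x/\R H_{2x}$) and their determinants, whereas your joint jet map $\alpha\mapsto(d^{-1/2}\alpha(x),d^{-1}\nabla\alpha(x))$ on $\R H_{2x}^{\perp}$ is only block \emph{triangular} for the decomposition $(\R H_x)^{\perp}\oplus(\R H_{2x}^{\perp}\cap\R H_x)$, with off-diagonal block $s\mapsto d^{-1}\nabla s(x)$ on $(\R H_x)^{\perp}$. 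The determinant is still the product of the diagonal determinants, but the operator norms of the map and of its inverse — which is exactly what $c_B^{-2}I\leq\Sigma\leq c_B^2 I$ requires — need the additional estimate $\norm{\nabla s(x)}\leq C\,d\,\norm{s}_{\mathcal{L}^2}$ for $s$ in $(\R H_x)^{\perp}$. This is standard peak-section/Bergman-kernel material, but it must be stated; the paper sidesteps it entirely because the $\nabla s_0(x)$ term cancels in the Wronskian and its change of measure uses only determinants. Second, your parenthetical on the upper tail is inaccurate: with $\Pr(\log\abs{P(Z)}>t)\leq C_Be^{-ce^{t}}$ the $m$-th moment of the positive part grows like $(\log m)^m$, which is not dominated by any geometric sequence. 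It is still well within $c_B(m+1)!$, and the cleanest way to see this without a constant raised to the power $m$ is the elementary inequality $(\log y)^m\leq m!\,y$ for $y\geq 1$, which gives $\mathbb{E}\bigl[((\log\abs{P(Z)})_+)^m\bigr]\leq m!\,\mathbb{E}\abs{P(Z)}\leq C_B\,m!$; combined with your lower-tail bound $C_B\,m!\,2^{-m}$ this yields the stated $c_B(m+1)!$.
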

\begin{proof} Let us consider the integral we want to estimate:
\begin{equation}\label{integrale da calcolare}
\int_{(\alpha,\beta)\in \R H^0(X,F^d\otimes E)^2}\abs{\log \norm{\frac{\pi}{d^{3/2}}W_{\alpha\beta}(x)}}^m\textrm{d}\gamma_d(\alpha,\beta).
\end{equation}
  First, remark that the function in the integral \eqref{integrale da calcolare} only depends on the $1$-jet of the sections $\alpha$ and $\beta$. We will then write the orthogonal decomposition $\R H^0(X,F^d\otimes E)=\R H_{2x}\oplus\R H_{2x}^{\perp}$, where  $\R H_{2x}$ is the space of real sections $s$ such that $s(x)=0$ and $\nabla s(x)=0$. As the Gaussian measure is a product measure, after the integration over the orthogonal of $\R H_{2x}^{\perp}\times \R H_{2x}^{\perp}$, we get that the integral \eqref{integrale da calcolare} is equal to
\begin{equation}\label{int sur l orthogo}
\int_{(\alpha,\beta)\in \R H_{2x}^{\perp}\times \R H_{2x}^{\perp}}\abs{\log \norm{\frac{\pi}{d^{3/2}}W_{\alpha\beta}(x)}}^m\textrm{d}\gamma_d\mid_{\R H_{2x}^{\perp}\times \R H_{2x}^{\perp}}(\alpha,\beta).
\end{equation}
 Using the notations of Section \ref{secpeak}, and in particular Definitions \ref{defn eval} and \ref{defn real eval}, let  $J_d: H_{2x}^{\perp}\rightarrow \R H_{2x}^{\perp}$ be the map $ (ev^{\R}_x\times ev^{\R}_{2x})^{-1}\circ (ev_x\times ev_{2x})$  and denote by $$I_d=J_d\times J_d:H_{2x}^{\perp}\times H_{2x}^{\perp}\rightarrow \R H_{2x}^{\perp}\times \R H_{2x}^{\perp}.$$ By changing of variables given by the isomorphism $I_d$, we get 
\begin{equation}\label{changing of variable}
\eqref{int sur l orthogo}=\int_{(\alpha,\beta)\in  H_{2x}^{\perp}\times H_{2x}^{\perp}}\abs{\log \norm{\frac{\pi}{d^{3/2}}W_{\alpha\beta}(x)}}^m(I_d^{-1})_*(\textrm{d}\gamma_d\mid_{\R H_{2x}^{\perp}\times \R H_{2x}^{\perp}})(\alpha,\beta).
\end{equation} 
By Corollary \ref{bounded determinant}, the maps $I_d$ and $I_d^{-1}$ have determinants bounded from above by a constant which only depends on $B$. In particular, there exists a constant $c_1$, depending only on $B$, such that 
\begin{equation}\label{int sur l ortho complex}
\eqref{changing of variable}\leq c_1 \int_{(\alpha,\beta)\in  H_{2x}^{\perp}\times  H_{2x}^{\perp}}\abs{\log \norm{\frac{\pi}{d^{3/2}}W_{\alpha\beta}(x)}}^m\textrm{d}\gamma^{\C}_d\mid_{ H_{2x}^{\perp}\times  H_{2x}^{\perp}}(\alpha,\beta)
\end{equation}
where $\gamma^{\C}_d$ is the complex Gaussian measure defined in Definition \ref{complexgaussian}.
In order to prove the result, we have to bound from above the quantity 
\begin{equation}\label{quanti da borner}
\int_{(\alpha,\beta)\in  H_{2x}^{\perp}\times  H_{2x}^{\perp}}\abs{\log \norm{\frac{\pi}{d^{3/2}}W_{\alpha\beta}(x)}}^m\textrm{d}\gamma^{\C}_d\mid_{ H_{2x}^{\perp}\times  H_{2x}^{\perp}}(\alpha,\beta)
\end{equation}
Let  $s_0$ and $s_1$ be the  peak sections at $x$ introduced in Definition \ref{defin peak} and we write $\alpha=a_0\sigma_0+a_1\sigma_1$ and $\beta=b_0\sigma_0+b_1\sigma_1$. We then have  $$\norm{W_{\alpha\beta}(x)}=\abs{a_0b_1-a_1b_0}\norm{(s_0\otimes\nabla s_1-s_1\otimes\nabla s_0)(x)}=\abs{a_0b_1-a_1b_0}\frac{d^{3/2}}{\pi}(1+O(d^{-c_2(B)})),$$ where the last equality follows from Proposition \ref{asympeak}.
This implies that the integral in \eqref{quanti da borner} equals
$$
\int_{\substack{a=(a_0,a_1)\in \C^2 \\ b=(b_0,b_1)\in\C^2}}\abs{\log \bigg(\abs{a_0b_1-a_1b_0}\norm{\frac{\pi}{d^{3/2}}(s_0\otimes\nabla s_1-s_1\otimes\nabla s_0)(x)}\bigg)}^m\frac{e^{-\abs{a}^2-\abs{b}^2}}{\pi^4}\textrm{d}a\textrm{d}b
$$
$$=\int_{\substack{a=(a_0,a_1)\in \C^2 \\ b=(b_0,b_1)\in\C^2}}\abs{\log \bigg(\abs{a_0b_1-a_1b_0}\bigg)}^m\frac{e^{-\abs{a}^2-\abs{b}^2}}{\pi^4}\big(1+O(d^{-c_3(B)})\big)\textrm{d}a\textrm{d}b
$$
\begin{equation}\label{firstinequality}
\leq 2\int_{\substack{a\in \C^2 \\ b\in\C^2}}\abs{\log\abs{a_0b_1-b_0a_1}}^m\frac{e^{-\abs{a}^2-\abs{b}^2}}{\pi^4}\textrm{d}a\textrm{d}b
\end{equation}
%Let  $\sigma'_i$ be the ortogonal projection of $\sigma_i$ onto $\ker J^1_x$ and denote by $\hat{sigma}_i$ the difference $\sigma_i-\sigma_i'$. By Proposition \ref{peak}, we have $\langle\sigma_i,\frac{\sigma'_i}{\norm{\sigma'_i}_{L^2}}\rangle_{L^2}=O(d^{-c})$, for some $c>0$. This implies that $\norm{\hat{\sigma}_i}=1-O(d^{-c})$. Moreover, we have $\langle \hat{\sigma}_i,\hat{\sigma}_j\rangle_{L^2}=O(d^{-c})$, for $\{i,j\}=\{0,1\}$. Let $\bar{\sigma}_i= \frac{\hat{\sigma}_i}{\norm{ \hat{\sigma}_i}_L^2}$.   
%The Gram matrix of the is $\Id(1+O(d^{-c}))$.
%This implies that the pushforward of the Gaussian measure  of the space $Vect(\hat{\sigma}_0,\hat{\sigma}_1)$ onto the space $Vect(\sigma_0,\sigma_1)$ equals CONTINUARE
where the last inequality holds for $d\geq d_B$, for some $d_B$ large enough.\\

In the remaining part of the proof, we will estimate the last integral appearing in \eqref{firstinequality}. In order to do this, for any $a=(a_0,a_1)$ we make an unitary trasformation of $\C^2$ (of coordinates $b_0,b_1$) by sending the vector $(1,0)$ to $v_a=\frac{1}{\sqrt{|a_0|^2+|a_1|^2}}(a_0,a_1)$ and the vector $(0,1)$ to $w_a=\frac{1}{\sqrt{|a_0|^2+|a_1|^2}}(-\bar{a}_1,\bar{a}_0).$ We will write any vector of $\C^2$ as a sum $tv_a+sw_a $ with $s,t\in\C$. Under this change of variables, the integral appearing in (\ref{firstinequality}) becomes
\begin{equation}\label{secondinequality}
\leq 2\int_{\substack{a\in \C^2 \\ (s,t)\in\C^2}}\abs{\log\abs{s}\norm{a}}^m\frac{e^{-\abs{a}^2-\abs{s}^2-\abs{t}^2}}{\pi^4}\textrm{d}a\textrm{d}s\textrm{d}t=2\int_{\substack{a\in \C^2 \\ s\in\C}}\abs{\log\abs{s}\norm{a}}^m\frac{e^{-\abs{a}^2-\abs{s}^2}}{\pi^{3}}\textrm{d}a\textrm{d}s.
\end{equation}
We pass to polar coordinates $a=re^{i\theta}$, for $\theta\in S^3$ and $r\in\R_+$, and $s=\rho e^{i\phi}$, for $\phi\in S^1$ and $\rho\in\R_+$, and we obtain 
\begin{equation}\label{thirdinequality}
2\int_{\substack{a\in \C^2 \\ s\in\C}}\abs{\log\abs{s}\norm{a}}^m\frac{e^{-\abs{a}^2-\abs{s}^2}}{\pi^{3}}\textrm{d}a\textrm{d}s=8\int_{\substack{r\in \R_+ \\ \rho\in\R_+}}\abs{\log \rho r}^me^{-r^2-\rho^2}r^3\rho\textrm{d}r\textrm{d}\rho.
\end{equation}
Writing $\log \rho r= \log \rho +\log r$,  developing the binomial and using the triangular inequality, we obtain 
\begin{equation}\label{forthinequality}
(\ref{thirdinequality})\leq  8\int_{\substack{r\in \R_+ \\ \rho\in\R_+}}\sum_{k=0}^m\binom{m}{k}\abs{\log \rho}^k\abs{\log r}^{m-k}e^{-r^2-\rho^2}r^3\rho\textrm{d}r\textrm{d}\rho.
\end{equation}
Let us study the integrals   $\int_{\rho\in\R_+}\abs{\log \rho}^ne^{-\rho^2}\rho\textrm{d}\rho$ and $\int_{r\in\R_+}\abs{\log r}^ne^{-r^2}r^3\textrm{d}r$ .
To compute these two integrals, we will use the following formula obtained by integration by part:
\begin{equation}\label{logrecursion}
\int(\log x)^n\textrm{d}x=x\log x-n\int(\log x)^{n-1}\textrm{d}x, \hspace{2mm} n>0.
\end{equation}
\begin{itemize}
\item Computation of the integral 
$\int_{\rho\in\R_+}\abs{\log \rho}^ne^{-\rho^2}\rho\textrm{d}\rho.$
We write \begin{equation}\label{secondintegral}
\int_{\rho\in\R_+}\abs{\log \rho}^ne^{-\rho^2}\rho\textrm{d}\rho=\int_{\rho=0}^{1}(-\log \rho)^ne^{-\rho^2}\rho\textrm{d}\rho+\int_{\rho=1}^{\infty}(\log \rho)^ne^{-\rho^2}\rho\textrm{d}\rho.
\end{equation}
For the first term of this sum we have 
\begin{equation}\label{firstpartsecond}
\int_{\rho=0}^{1}(-\log \rho)^ne^{-\rho^2}\rho\textrm{d}\rho\leq \frac{\sqrt{2}}{2}\int_{\rho=0}^{1}(-\log \rho)^n\textrm{d}\rho =\frac{\sqrt{2}}{2}n!
\end{equation}
where we used first that $e^{-\rho^2}\rho\leq \frac{\sqrt{2}}{2}$ for $\rho\in [0,1]$ and then we used $n$ times the formula (\ref{logrecursion}).\\
For the second term of the sum in \eqref{secondintegral}, we use first the fact that $e^{-\rho^2}\rho\leq \frac{e^{-\frac{1}{\rho^2}}}{\rho^3}$ for any $\rho\geq 1$ and then the change $t=1/\rho$, to have 
\begin{equation}\label{new2}
\int_{\rho=1}^{\infty}(\log \rho)^ne^{-\rho^2}\rho\textrm{d}\rho\leq \int_{\rho=1}^{\infty}(\log \rho)^n\frac{e^{-\frac{1}{\rho^2}}}{\rho^3}\textrm{d}\rho\underset{t=1/\rho}{=}-\int_1^0(\log(1/t))^nte^{-t}\textrm{d}t=\int^1_0(-\log(t))^nte^{-t}\textrm{d}t.
\end{equation}
The last integral is the same as in (\ref{firstpartsecond}), so from (\ref{firstpartsecond}) and \eqref{new2} we obtain 
\begin{equation}\label{secondpartsecond}
\int_{\rho=1}^{\infty}(\log \rho)^ne^{-\rho^2}\rho\textrm{d}\rho\leq \frac{\sqrt{2}}{2}n!
\end{equation}
Putting  (\ref{firstpartsecond}) and (\ref{secondpartsecond}) in (\ref{secondintegral}), we obtain 
\begin{equation}\label{secondcomputation}
\int_{\rho\in\R_+}\abs{\log \rho}^ne^{-\rho^2}\rho\textrm{d}\rho\leq \sqrt{2}n!.
\end{equation}
\item Computation of the integral $\int_{r\in\R_+}\abs{\log r}^ne^{-r^2}r^3\textrm{d}r$. As before, we write \begin{equation}\label{firstintegral}
\int_{r\in\R_+}\abs{\log r}^ne^{-r^2}r^3\textrm{d}r=\int_{r=0}^1(-\log r)^ne^{-r^2}r^3\textrm{d}r+\int_{r=1}^{\infty}(\log r)^ne^{-r^2}r^3\textrm{d}r.
\end{equation}
For the first term of the sum, we get 
\begin{equation}\label{firstpartfirst}
\int_{r=0}^1(-\log r)^ne^{-r^2}r^3\textrm{d}r\leq (-1)^n\frac{\sqrt{2}}{\sqrt{3}}\int_{r=0}^1(\log r)^n\textrm{d}r=\frac{\sqrt{2}}{\sqrt{3}}n!
\end{equation}
where the first inequality follows from $e^{-r^2}r^3\leq \frac{\sqrt{2}}{\sqrt{3}}$, for $r\in [0,1]$, and the last equality is obtained using $n$ times the formula (\ref{logrecursion}).\\
For the second term of the sum in the right-hand side of (\ref{firstintegral}), we use integration by parts with respect to the functions $-\frac{1}{2}(\log r)^nr^2$ and $-2re^{-r^2}$ to obtain
\begin{equation}\label{new}\int_{s=1}^{\infty}(\log r)^ne^{-r^2}r^3\textrm{d}r=[-\frac{1}{2}(\log r)^nr^2e^{-r^2}]_{r=1}^{\infty}+\frac{n}{2}\int_{r=1}^{\infty}(\log r)^{n-1}re^{-r^2}\textrm{d}r+\int_{r=1}^{\infty}(\log r)^nre^{-r^2}\textrm{d}r.
\end{equation}
 As $[-\frac{1}{2}(\log r)^nr^2e^{-r^2}]_{r=1}^{\infty}=0$ we obtain,  by using \eqref{secondpartsecond} in \eqref{new}, that \begin{equation}\label{secondpartfirst}
\int_{s=1}^{\infty}(\log r)^ne^{-r^2}r^3\textrm{d}r\leq \frac{3\sqrt{2}}{4}n!.
\end{equation}
Putting (\ref{firstpartfirst}) and (\ref{secondpartfirst}) in (\ref{firstintegral}), we get 
\begin{equation}\label{firstcomputation}
\int_{r\in\R_+}\abs{\log r}^ne^{-r^2}r^3\textrm{d}s\leq \frac{4\sqrt{6}+9\sqrt{2}}{12}n!.
\end{equation}
\end{itemize}
Now, we use (\ref{secondcomputation}) and (\ref{firstcomputation})   and  we obtain the following estimate:
\begin{equation}\label{fifthinequality}\int_{\substack{r\in \R_+ \\ \rho\in\R_+}}\sum_{k=0}^m\binom{m}{k}\abs{\log \rho}^k\abs{\log r}^{m-k}e^{-r^2-s^2}r^3\rho\textrm{d}r\textrm{d}\rho\leq
\frac{4\sqrt{3}+9}{6}\sum_{k=0}^m\binom{m}{k}k!(m-k)!\leq  \frac{4\sqrt{3}+9}{6}(m+1)!.
\end{equation}
Putting (\ref{fifthinequality}) in (\ref{forthinequality}) and using  (\ref{thirdinequality}) , (\ref{secondinequality}) and (\ref{firstinequality}), we obtaine the desired estimate for \eqref{quanti da borner}, hence the result.
\end{proof}
%\begin{oss} In \cite[Section 1]{gwexp}, Gayet and Welschinger have estimated similar moments in order to study  the vanishing locus of real polynomials (more generally  vanishing locus of real sections of line bundles). However, their method of computation is quite different from ours.
%\end{oss}
\section{Proof of Theorem \ref{rarefaction}}\label{secproof}
In this section, we prove our main result. We follow the notations of Sections \ref{ramframwork} and \ref{secmoment}.
\begin{prop}\label{largedev} Let $X$ be a real algebraic curve equipped with a compatible volume form $\omega$ of total volume $1$ and let $F\in\Pic^1_{\R}(X)$. Fix   a sequence of positive real numbers $(a_d)_d$. Then, for any $B>0$ there exists  $d_B\in\mathbb{N}$  and a constant $c_B$ such that, for any $E\in\Pic_{\R}^0(X)$, any $d\geq d_B$ and any  sequence of smooth functions $(\varphi_d)_d$ with $\textrm{dist}(\textrm{supp}(\varphi_d),\R X)\geq B\frac{\log d}{\sqrt{d}}$, the following holds
$$\gamma_{F^d\otimes E}\bigg\{(\alpha,\beta)\in\R H^0(X,F^d\otimes E)^2, \abs{\int_X\log \bigg(\frac{\pi}{d^{3/2}}\norm{W_{\alpha\beta}(x)}\bigg)\partial\bar{\partial}\varphi_d}\geq a_d\bigg\}$$ $$\leq c_B\exp(-\frac{a_d }{2\norm{\partial\bar{\partial}\varphi_d}_{\infty}\Vol(\textrm{Supp}(\partial\bar{\partial}\varphi_d))}).$$
Here, $\textrm{dist}(\cdot,\cdot)$ is the distance in $X$ induced by $\omega$,
$\gamma_{F^d\otimes E}$ is the Gaussian measure on $\R H^0(X,F^d\otimes E)^2$  constructed in Section \ref{secgauss} and $\norm{\cdot}$ denote the pointwise norm induced by the Hermitian metrics on $F$ and  $E$ given by Proposition \ref{metr}.
\end{prop}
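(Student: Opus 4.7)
The strategy is to control the random variable
$$Y_d(\alpha,\beta)\doteqdot\int_X\log\bigg(\frac{\pi}{d^{3/2}}\norm{W_{\alpha\beta}(x)}\bigg)\partial\bar\partial\varphi_d$$
by combining the pointwise moment estimates of Proposition \ref{moments} with the exponential form of Markov's inequality applied to the moment generating function of $|Y_d|$.

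First I would write $\partial\bar\partial\varphi_d=g_d\,\omega$, so that $|g_d(x)|\le C\norm{\partial\bar\partial\varphi_d}_\infty$ for a constant $C$ depending only on $\omega$, and $\textrm{Supp}(g_d)\subset\textrm{supp}(\varphi_d)$ lies at distance at least $B\log d/\sqrt d$ from $\R X$. Consequently every $x$ in the effective domain of integration satisfies the hypothesis of Proposition \ref{moments}. Minkowski's integral inequality in $L^m(\gamma_{F^d\otimes E})$, followed by Proposition \ref{moments} applied pointwise, then yields
$$\bigg(\int|Y_d|^m\,\textrm{d}\gamma_{F^d\otimes E}\bigg)^{\!1/m}\le (c_B(m+1)!)^{1/m}\cdot C\,\norm{\partial\bar\partial\varphi_d}_\infty\Vol(\textrm{Supp}(\partial\bar\partial\varphi_d)),$$
hence $\int|Y_d|^m\,\textrm{d}\gamma_{F^d\otimes E}\le c_B(m+1)!\,M_d^m$, where $M_d\doteqdot\norm{\partial\bar\partial\varphi_d}_\infty\Vol(\textrm{Supp}(\partial\bar\partial\varphi_d))$ (the universal factor $C$ coming from the identification $\partial\bar\partial\varphi_d=g_d\,\omega$ being absorbed into the redefined constant $c_B$).

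With this factorial moment bound in hand, I would compute the moment generating function of $|Y_d|$: for any $\lambda>0$ with $\lambda M_d<1$,
$$\int e^{\lambda|Y_d|}\,\textrm{d}\gamma_{F^d\otimes E}=\sum_{m\ge 0}\frac{\lambda^m}{m!}\int|Y_d|^m\,\textrm{d}\gamma_{F^d\otimes E}\le c_B\sum_{m\ge 0}(m+1)(\lambda M_d)^m=\frac{c_B}{(1-\lambda M_d)^2},$$
so the exponential Markov inequality gives $\gamma_{F^d\otimes E}\{|Y_d|\ge a_d\}\le c_B(1-\lambda M_d)^{-2}e^{-\lambda a_d}$. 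Choosing $\lambda=1/(2M_d)$, i.e.\ half the radius of convergence of the series above, produces the factor $e^{-a_d/(2M_d)}$ in the exponent and a prefactor $4c_B$, matching the claim after renaming $4c_B$ as the new $c_B$.

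I do not foresee a substantial obstacle. The decisive input, namely the factorial growth of moments in Proposition \ref{moments}, is precisely the regularity required for the moment generating function to have a positive radius of convergence; the remainder of the argument is routine. The support hypothesis on $\varphi_d$ transfers to $\partial\bar\partial\varphi_d$ at no cost, and the uniformity of the constants in $d$, in $E\in\Pic^0_{\R}(X)$, and in the function $\varphi_d$ is inherited directly from Proposition \ref{moments}.
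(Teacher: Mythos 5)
Your proposal is correct and follows essentially the same route as the paper: both reduce the $m$-th moment of $Y_d$ to the pointwise moment bound of Proposition \ref{moments} with an extra factor $\big(\norm{\partial\bar{\partial}\varphi_d}_{\infty}\Vol(\textrm{Supp}(\partial\bar{\partial}\varphi_d))\big)^m$, sum the resulting series $\sum_m(m+1)x^m=(1-x)^{-2}$ to control the moment generating function, and apply the exponential Markov inequality with $t_d=\big(2\norm{\partial\bar{\partial}\varphi_d}_{\infty}\Vol(\textrm{Supp}(\partial\bar{\partial}\varphi_d))\big)^{-1}$. The only (cosmetic) difference is that you use Minkowski's integral inequality where the paper uses H\"older with exponents $m$ and $m/(m-1)$; just note that your auxiliary constant $C$ from writing $\partial\bar{\partial}\varphi_d=g_d\,\omega$ enters as $C^m$ and so must be absorbed into $M_d$ (harmlessly rescaling the exponent) rather than into $c_B$ --- with the natural convention that $\norm{\partial\bar{\partial}\varphi_d}_{\infty}$ is the sup of the density $g_d$, one has $C=1$ and the issue disappears.
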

\begin{proof}
For any $t_d>0$, let us denote 
\begin{equation}\label{defexp}
\exp(t_d\abs{\int_X\log\bigg(\frac{\pi}{d^{3/2}}\norm{W_{\alpha\beta}(x)}\bigg)\partial\bar{\partial}\varphi_d})=\sum_{m=0}^{\infty}\frac{t_d^m}{m!}\abs{\int_X\log\bigg(\frac{\pi}{d^{3/2}}\norm{W_{\alpha\beta}(x)}\bigg)\partial\bar{\partial}\varphi_d}^m.
\end{equation}
Remark that 
\begin{equation}\label{equivalence of quantities}
\abs{\int_X\log\bigg(\frac{\pi}{d^{3/2}}\norm{W_{\alpha\beta}(x)}\bigg)\partial\bar{\partial}\varphi_d}\geq a_d d\Leftrightarrow\exp(t_d\abs{\int_X\log\bigg(\frac{\pi}{d^{3/2}}\norm{W_{\alpha\beta}(x)}\bigg)\partial\bar{\partial}\varphi_d})\geq e^{t_da_d}
\end{equation}
so that, by Markov inequality, we have
$$
\gamma_{F^d\otimes E}\bigg\{(\alpha,\beta)\in\R H^0(X,F^d\otimes E)^2, \abs{\int_X\log\frac{\pi}{d^{3/2}}\norm{W_{\alpha\beta}(x)}\partial\bar{\partial}\varphi_d}\geq a_d\bigg\}\leq 
$$
\begin{equation}\label{markov}
e^{-t_d a_d}\int_{\R H^0(X,F^d\otimes E)^2}\exp(t_d\abs{\int_X\log\bigg(\frac{\pi}{d^{3/2}}\norm{W_{\alpha\beta}(x)}\bigg)\partial\bar{\partial}\varphi_d})\textrm{d}\gamma_{F^d\otimes E}.
\end{equation}
Now, we have 
\begin{equation}\label{holder1}
\abs{\int_X\log\bigg(\frac{\pi}{d^{3/2}}\norm{W_{\alpha\beta}(x)}\bigg)\partial\bar{\partial}\varphi_d}^m\leq
\norm{\partial\bar{\partial}\varphi_d}^m_{\infty}\abs{\int_{\textrm{Supp}(\partial\bar{\partial}\varphi_d)}\log\bigg(\frac{\pi}{d^{3/2}}\norm{W_{\alpha\beta}(x)}\bigg)\omega}^m.
\end{equation}
We then apply H\"older inequality with $m$ and $m/(m-1)$ for the functions $\log \bigg(\frac{\pi}{d^{3/2}}\norm{W_{\alpha\beta}(x)}\bigg)$ and $1$, so that 
\begin{equation}\label{holder2}
(\ref{holder1})\leq \norm{\partial\bar{\partial}\varphi_d}^m_{\infty}\Vol(\textrm{Supp}(\partial\bar{\partial}\varphi_d))^{m-1}\int_{\textrm{Supp}(\partial\bar{\partial}\varphi_d)}\abs{\log\bigg(\frac{\pi}{d^{3/2}}\norm{W_{\alpha\beta}(x)}\bigg)}^m\omega.
\end{equation}
By Proposition \ref{moments}, there exists $d_B\in\mathbb{N}$ and a positive constant $c_B$ such that for any $d\geq d_B$ we get
\begin{equation}\label{holder3}
\textrm{right-hand side\hspace{1.5mm}of\hspace{1.5mm}}(\ref{holder2})\leq \norm{\partial\bar{\partial}\varphi_d}^m_{\infty}\Vol(\textrm{Supp}(\partial\bar{\partial}\varphi_d))^{m}c_B(m+1)!.
\end{equation}
Then, by (\ref{markov}), (\ref{defexp}) and (\ref{holder3}), we have
$$\gamma_{F^d\otimes E}\bigg\{(\alpha,\beta)\in\R H^0(X,F^d\otimes E)^2, \abs{\int_X\log\frac{\norm{W_{\alpha\beta}(x)}}{d^{3/2}}\partial\bar{\partial}\varphi_d}\geq a_d\bigg\}\leq$$
\begin{equation}\label{holdfinal}
e^{-t_d a_d}c_B\sum_{m=0}^{\infty}(m+1)\bigg(\norm{\partial\bar{\partial}\varphi_d}_{\infty}\cdot\Vol(\textrm{Supp}(\partial\bar{\partial}\varphi_d))\bigg)^mt_d^m.
\end{equation}
Now, we have the identity $\sum_{m=0}^{\infty}(m+1)x^m=\frac{\textrm{d}}{\textrm{d}x}\sum_{m=1}^{\infty}x^m=\frac{\textrm{d}}{\textrm{d}x}\big(\frac{1}{(1-x)}-1\big)=\frac{1}{(1-x)^2}$, so that the right hand side in \eqref{holdfinal} equals 
\begin{equation}\label{final}
\frac{c_B\exp(-t_d a_d)}{\big(1-t_d\norm{\partial\bar{\partial}\varphi_d}_{\infty}\cdot\Vol(\textrm{Supp}(\partial\bar{\partial}\varphi_d)) \big)^2}
\end{equation}
Putting $t_d=\big(2\norm{\partial\bar{\partial}\varphi_d}_{\infty}\cdot\Vol(\textrm{Supp}(\partial\bar{\partial}\varphi_d))\big)^{-1}$, we get the result.
\end{proof}
\begin{lemma}[Lemma 2 of \cite{gwexp}]\label{cutoff} There exist positive constants $C_i$, $i\in\{1,\dots, 4\}$, and  a family of cutoff functions $\chi_t:X\rightarrow [0,1]$, defined for $t\in (0,t_0]$, for some $t_0>0$, such that 
\begin{enumerate}
\item $\Vol(\textrm{supp}(\partial\bar{\partial}\chi_t))\leq C_1t$;
\item $\Vol(X\setminus\chi_t^{-1}(1))\leq C_2t$;
\item $\norm{\partial\bar{\partial}\chi_t}_{L^{\infty}}\leq C_3t^{-2}$;
\item $\textrm{dist}(\textrm{supp}(\chi_t),\R X))\geq C_4t$.
\end{enumerate}
\end{lemma}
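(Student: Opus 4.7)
The plan is to build $\chi_t$ as a smooth function of the distance to $\R X$, scaled so that the transition region has width of order $t$. First I would use the fact that $\R X$ is a compact smooth $1$-dimensional real submanifold of the real $2$-manifold $X$; since $X$ is an orientable surface and each component of $\R X$ is a circle with trivial normal bundle, there is $\epsilon_0 > 0$ such that the tubular neighborhood
\[
U_{\epsilon_0} \doteqdot \{x \in X : \rho(x) < \epsilon_0\}, \qquad \rho(x) \doteqdot \textrm{dist}(x,\R X),
\]
is diffeomorphic to $\R X \times (-\epsilon_0,\epsilon_0)$ via the normal exponential map of the Riemannian metric induced by $\omega$. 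On this tubular neighborhood the distance function $\rho$ is smooth, and its gradient has unit norm, so $\|\rho\|_{C^2(U_{\epsilon_0}/2)}$ is uniformly bounded.

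Next I would fix, once and for all, a smooth non-decreasing function $\psi : \R \to [0,1]$ with $\psi(s) = 0$ for $s \leq 1$ and $\psi(s) = 1$ for $s \geq 2$, and set $t_0 \doteqdot \epsilon_0/3$. For $t \in (0,t_0]$, define
\[
\chi_t(x) \doteqdot
\begin{cases}
\psi(\rho(x)/t) & \text{if } \rho(x) \leq \epsilon_0, \\
1 & \text{if } \rho(x) > \epsilon_0.
\end{cases}
\]
Since $\psi \equiv 1$ on $[2,\infty)$ and $2t \leq 2t_0 < \epsilon_0$, the two definitions agree on the overlap, so $\chi_t$ is smooth on $X$. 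By construction $\textrm{supp}(\chi_t) \subset \{\rho \geq t\}$, proving item 4 with $C_4 = 1$.

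For items 1–3, the key observation is that $\partial\bar\partial\chi_t$ vanishes wherever $\chi_t$ is locally constant, so $\textrm{supp}(\partial\bar\partial\chi_t) \subset \{t \leq \rho \leq 2t\}$. Since $\R X$ is compact and $\rho$ behaves like a normal coordinate inside the tubular neighborhood, the co-area formula (or the diffeomorphism $U_{\epsilon_0} \cong \R X \times (-\epsilon_0,\epsilon_0)$) yields
\[
\Vol\bigl(\{t \leq \rho \leq 2t\}\bigr) \leq C\,\Vol(\R X)\,t,
\]
giving items 1 and 2 with $C_1, C_2$ depending only on $\Vol(\R X)$ and the metric. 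For item 3, in local holomorphic coordinates $z$ we have $\partial\bar\partial\chi_t = \frac{\partial^2 \chi_t}{\partial z \partial \bar z}\,dz \wedge d\bar z$, and the chain rule gives
\[
\partial\bar\partial\chi_t \;=\; \frac{\psi''(\rho/t)}{t^2}\,\partial\rho \wedge \bar\partial\rho \;+\; \frac{\psi'(\rho/t)}{t}\,\partial\bar\partial\rho.
\]
The $\psi''/t^2$ term dominates: since $\|\psi\|_{C^2}$ is fixed and $\rho$ has uniformly bounded $C^2$-norm on $\{t \leq \rho \leq 2t\} \subset U_{\epsilon_0}/2$, we get $\|\partial\bar\partial\chi_t\|_{L^\infty} \leq C_3/t^2$, establishing item 3.

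The only mildly technical point is the smoothness and $C^2$-control of $\rho$ near $\R X$, which is a standard consequence of the tubular neighborhood theorem for smooth compact submanifolds of a Riemannian manifold; no new ideas are needed beyond choosing $\epsilon_0$ small enough that the normal exponential map is a diffeomorphism. Outside $U_{\epsilon_0}$, there is nothing to check since $\chi_t \equiv 1$ there. Thus all four properties hold with constants depending only on $(X, \omega)$ and the fixed profile $\psi$.
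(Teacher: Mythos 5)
Your construction is correct and is essentially the standard one: the paper gives no proof of this lemma but imports it from \cite{gwexp}, where the cutoff is likewise obtained by composing a fixed profile function with the rescaled distance to $\R X$ inside a tubular neighborhood, so that $\partial\bar{\partial}\chi_t$ is supported in an annulus of width $\sim t$ and the $\psi''/t^2$ term governs the $L^\infty$ bound. The only point worth flagging is that $\rho=\textrm{dist}(\cdot,\R X)$ is not differentiable on $\R X$ itself, but this is harmless since $\chi_t\equiv 0$ on $\{\rho<t\}$, exactly as your argument implicitly uses.
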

We now prove the following fiberwise version of Theorem \ref{rarefaction}.
\begin{thm}\label{rarefiber} Let $\ell(d)$ be a sequence of positive real numbers such that $\ell(d)\geq B(\log d)$ for some $B>0$. Then there exist positive constants $c_1$ and $c_2$ such that 
$$\mu_{F^d\otimes E}\big\{u\in \mathcal{M}_d^{\R}(X,F^d\otimes E), \#(\Crit(u)\cap \R X)\geq \ell(d)\sqrt{d}\big\}\leq c_1e^{-c_2\ell(d)^2}.$$
Here, $\mu_{F^d\otimes E}$ is the probability measure defined in Definition \ref{measurefiber} and $ \mathcal{M}_d^{\R}(X,F^d\otimes E)$ is defined in Definition \ref{rhfs}.
\end{thm}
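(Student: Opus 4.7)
\emph{Proof proposal.} The plan is first to reduce the Fubini--Study probability to a Gaussian measure estimate via Proposition \ref{vs}, then to express $\#(\Crit(u_{\alpha\beta})\cap\R X)$ through Poincaré--Lelong applied to the Wronskian, and finally to invoke Proposition \ref{largedev}. Since the quantity $\#(\Crit(u_{\alpha\beta})\cap\R X)$ is invariant under $(\alpha,\beta)\mapsto(\lambda\alpha,\lambda\beta)$ for $\lambda\in\R^*$, the set we wish to measure is a cone in $\R H^0(X,F^d\otimes E)^2$; applying Proposition \ref{vs} to its indicator function identifies its $\mu_{F^d\otimes E}$-measure with its $\gamma_{F^d\otimes E}$-measure. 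By Proposition \ref{wronskiancritical}, it thus suffices to bound $\gamma_{F^d\otimes E}\{(\alpha,\beta):N_\R(\alpha,\beta)\geq\ell(d)\sqrt{d}\}$, where $N_\R(\alpha,\beta)$ denotes the number of real zeros of the Wronskian $W_{\alpha\beta}$.

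Next, I will localize $N_\R$ using the cutoffs $\chi_t$ from Lemma \ref{cutoff}. The Wronskian is a holomorphic section of $F^{2d}\otimes E^2\otimes T_X^*$, a bundle of total degree $2d+2g-2$; Poincaré--Lelong together with integration by parts give
\begin{equation*}
\int_X\chi_t\,[Z_{W_{\alpha\beta}}]\;=\;c_0\int_X\log\bigg(\frac{\pi}{d^{3/2}}\|W_{\alpha\beta}\|\bigg)\partial\bar\partial\chi_t\;+\;\int_X\chi_t\,c_1(F^{2d}\otimes E^2\otimes T_X^*),
\end{equation*}
for a universal constant $c_0$; the substitution of $\pi d^{-3/2}\|W_{\alpha\beta}\|$ for $\|W_{\alpha\beta}\|$ is legitimate because $\int_X\partial\bar\partial\chi_t=0$. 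By Lemma \ref{cutoff}(2) the curvature term is at least $(2d+2g-2)-O(dt)$, while by Lemma \ref{cutoff}(4) $\chi_t$ vanishes in a neighbourhood of $\R X$, so $\int_X\chi_t[Z_{W_{\alpha\beta}}]$ does not charge any real zero of $W_{\alpha\beta}$, giving $\int_X\chi_t[Z_{W_{\alpha\beta}}]\leq 2d+2g-2-N_\R(\alpha,\beta)$. Combining,
\begin{equation*}
N_\R(\alpha,\beta)\;\leq\;|c_0|\,\left|\int_X\log\bigg(\frac{\pi}{d^{3/2}}\|W_{\alpha\beta}\|\bigg)\partial\bar\partial\chi_t\right|\;+\;O(dt).
\end{equation*}

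Finally, I will calibrate $t$ and apply Proposition \ref{largedev}. I take $t:=\kappa\,\ell(d)/\sqrt{d}$ with $\kappa>0$ small enough that the $O(dt)$ term is at most $\tfrac12\ell(d)\sqrt{d}$. By Lemma \ref{cutoff}(4), $\textrm{dist}(\textrm{supp}\,\chi_t,\R X)\geq C_4\kappa\ell(d)/\sqrt{d}\geq(C_4\kappa B)\log d/\sqrt{d}$, which meets the hypothesis of Proposition \ref{largedev} for some $B'>0$. On the event $\{N_\R\geq\ell(d)\sqrt{d}\}$ the integral above must then exceed $a_d:=\ell(d)\sqrt{d}/(2|c_0|)$, and combining Proposition \ref{largedev} with the bound $\|\partial\bar\partial\chi_t\|_\infty\cdot\Vol(\Supp\partial\bar\partial\chi_t)\leq C_1C_3\,t^{-1}=O(\sqrt{d}/\ell(d))$ from Lemma \ref{cutoff}(1,3) produces an exponent of order $-\ell(d)^2$, yielding $c_1\exp(-c_2\ell(d)^2)$ as required.

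The main obstacle is the joint choice of the scale $t$: it must be small enough that the boundary contribution $O(dt)$ does not dominate the target $\ell(d)\sqrt{d}$, yet large enough that $\textrm{supp}\,\chi_t$ remains at distance $\gtrsim\log d/\sqrt{d}$ from $\R X$, the regime where the peak-section and jet-map estimates underlying Propositions \ref{moments} and \ref{largedev} are valid. This tension is precisely what forces the growth hypothesis $\ell(d)\geq B\log d$ in the theorem.
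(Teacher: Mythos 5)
Your proposal is correct and follows essentially the same route as the paper's proof: reduction to the Gaussian measure of a cone via Proposition \ref{vs}, identification of real critical points with real zeros of the Wronskian (Proposition \ref{wronskiancritical}), Poincar\'e--Lelong tested against the cutoffs $\chi_t$ at scale $t\sim\ell(d)/\sqrt{d}$, and finally Proposition \ref{largedev}. Your calibration of $t$ matches the paper's choice $t_d=\ell(d)/(4C_2\sqrt{d})$, and you correctly locate where the hypothesis $\ell(d)\geq B\log d$ enters.
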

\begin{proof} For any pair of real global sections $(\alpha,\beta)\in \R H^0(X,F^d\otimes E)^2$ without common zeros, let $u_{\alpha\beta}$ be the real branched covering defined by $x\mapsto[\alpha(x):\beta(x)]$.
Consider the set 
\begin{equation}\label{cones}
\mathcal{C}_{\ell(d)}\doteqdot\{(\alpha,\beta)\in \R H^0(X,F^d\otimes E)^2, \#(\Crit(u_{\alpha\beta})\cap \R X)\geq \ell(d)\sqrt{d}\big\}.
\end{equation}
Remark that this set is a  cone in $\R H^0(X,F^d\otimes E)^2$.  By Proposition \ref{vs}, this implies that the Gaussian measure of $\mathcal{C}_{\ell(d)}$ equals the Fubini-Study measure of its projectivization, which is exactly the measure we want to estimate. In order to obtain the result, we will then compute the Gaussian measure of  the cone (\ref{cones}). Moreover,  by Proposition \ref{wronskiancritical}, we have that $x\in \Crit(u_{\alpha\beta}) $ if and only if $W_{\alpha\beta}(x)=0$, so that, in order to compute $\#\Crit(u_{\alpha\beta})$, we can compute the number of zeros of $W_{\alpha\beta}$. To do this, we will use Poincar\'e-Lelong formula, that is the following equality between currents
\begin{equation}\label{poincarelelong}
\omega_d- \sum_{x\in\{W_{\alpha\beta}=0\}}\delta_x =\frac{1}{2\pi i}\partial\bar{\partial}\log \norm{W_{\alpha\beta}},
\end{equation}
 where $\norm{\cdot}$ is the (induced) metric on $F^{2d}\otimes E^2\otimes T^*_{X}$ given by Proposition \ref{metr} and  $\omega_d$ is the corresponding curvature form.
Remark that $\omega_d$ equals $2d\cdot\omega+O(1)$ (the term $2d\cdot\omega$ comes from the curvature form of $F^{2d}\otimes E^{2}$ and the term $O(1)$ from the curvature form of $T^*_{X}$). Moreover, remark that  the Hermitian metric $\frac{\pi}{d^{3/2}}\norm{\cdot}$ has the same curvature of the Hermitian metric $\norm{\cdot}$, because the curvature form is not affected by a multiplicative constant.
Then,  Poincar\'e-Lelong formula \eqref{poincarelelong}, can also be read \begin{equation}\label{poincareinthiscase}
 2d\cdot\omega+O(1)- \sum_{x\in\{W_{\alpha\beta}=0\}}\delta_x =\frac{1}{2\pi i}\partial\bar{\partial}\log \bigg(\frac{\pi}{d^{3/2}}\norm{W_{\alpha\beta}}\bigg)
 \end{equation}
where the equality is in the sense of currents. We will apply \eqref{poincareinthiscase} for the functions $\chi_{t_d}$ given by Lemma \ref{cutoff}, for $t_d=\frac{\ell(d)}{4C_2\sqrt{d}}$, where $C_2$ is the constant appearing in Lemma \ref{cutoff}. By \eqref{poincareinthiscase}, we then get
\begin{equation}\label{poincaresub0}
\frac{1}{2\pi}\abs{\int_X\log \bigg(\frac{\pi}{d^{3/2}}\norm{W_{\alpha\beta}}\bigg)\partial\bar{\partial}\chi_{t_d}}\geq \abs{2d\big(1-\frac{\ell(d)}{4\sqrt{d}}\big)+O(1)-\sum_{x\in\{W_{\alpha\beta}=0\}}\chi_{\frac{\ell(d)}{\sqrt{d}}}(x)}.
\end{equation}
Remark that, for any pair of real global sections $(\alpha,\beta)$ in the cone $\mathcal{C}_{\ell(d)}$ defined in \eqref{cones},  we have 
\begin{equation}\label{poincaresub1}
\sum_{x\in\{W_{\alpha\beta}=0\}}\chi_{\frac{\ell(d)}{\sqrt{d}}}(x)\leq 2d+2g-2-\ell(d)\sqrt{d},
\end{equation}
 where $g$ is the genus of $X$. Then, putting \eqref{poincaresub1} in \eqref{poincaresub0}, we get
$$\frac{1}{2\pi}\abs{\int_X\log \bigg(\frac{\pi}{d^{3/2}}\norm{W_{\alpha\beta}}\bigg)\partial\bar{\partial}\chi_{\frac{\ell(d)}{\sqrt{d}}}}\geq \frac{1}{2}\ell(d)\sqrt{d}+O(1),$$
for any $(\alpha,\beta)\in \mathcal{C}_{\ell(d)}$.
Then, for $d$ large enough,  the cone (\ref{cones}) is included in the set
$$\bigg\{(\alpha,\beta)\in \R H^0(X,F^d\otimes E)^2, \abs{\int_X\log\bigg(\frac{\pi}{d^{3/2}}\norm{W_{\alpha\beta}}\bigg)\partial\bar{\partial}\chi_{\frac{\ell(d)}{\sqrt{d}}}}\geq \ell(d)\sqrt{d}\bigg\}.$$
The result then follows from Proposition \ref{largedev} and Lemma \ref{cutoff}.
\end{proof}
\begin{proof}[Proof of Theorem \ref{rarefaction}]
We fix a degree $1$ real holomorphic line bundle $F$ over $X$, so that for any $L\in\Pic_{\R}^d(X)$ there exists an unique degree $0$  real holomorphic line bundle $ E\in \Pic^0_{\R}(X)$ such that $L=F^d\otimes E$. The result then follows by integrating the inequality appearing in Theorem  \ref{rarefiber} along the compact base $\Pic^0_{\R}(X)\simeq \Pic^d_{\R}(X)$ (the last isomorphism is given by the choice of the degree $1$ real line bundle $F$).
\end{proof}
 
%\cite{ke}

%\bibliographystyle{alpha}
\bibliographystyle{plain}
\bibliography{biblio}
\end{document}